\let\reftagform@=\tagform@
\def\tagform@#1{\maketag@@@{(\ignorespaces\textcolor{blue}{#1}\unskip\@@italiccorr)}}
\renewcommand{\eqref}[1]{\textup{\reftagform@{\ref{#1}}}}
\newtheorem{theorem}{Theorem}
\theoremstyle{plain}
\newtheorem{corollary}{Corollary}
\newtheorem{lemma}{Lemma}
\newtheorem{remark}{Remark}
\numberwithin{equation}{section}
\begin{document}
\title[ On Cauchy--Schwarz type inequalities]{ On Cauchy--Schwarz type inequalities and applications to numerical radius inequalities}
\author[M.W. Alomari]{Mohammad W. Alomari}

\address {Department of Mathematics, Faculty of Science and Information	Technology, Irbid National University,  P.O. Box 2600, Irbid, P.C. 21110, Jordan.}
\email{mwomath@gmail.com}
\date{\today}
\subjclass[2000]{47A12,  47A30, 47A63.}

\keywords{Cauchy--Schwarz inequality, Kato's inequality, Numerical radius}

\begin{abstract}

In this work, a refinement of the Cauchy--Schwarz inequality in inner product space is proved.  A more general refinement of the Kato's inequality or the so called mixed Schwarz inequality is established. Refinements of some famous numerical radius inequalities are also pointed out. As shown in this work, these refinements generalize  and refine some recent and old results obtained in literature. Among others, it is proved that if $T\in\mathscr{B}\left(\mathscr{H}\right)$, then
\begin{align*}
\omega^{2}\left(T\right) 
&\le \frac{1}{12} \left\| \left| T \right|+\left| {T^* } \right|\right\|^2 + \frac{1}{3} \omega\left(T\right)\left\| \left| T \right|+\left| {T^* } \right|\right\|  
\\
&\le\frac{1}{6} \left\| \left| T \right|^2+ \left| {T^* } \right|^2 \right\|   + \frac{1}{3} \omega\left(T\right)\left\| \left| T \right|+\left| {T^* } \right|\right\|, 
\end{align*}
which refines the recent inequality obtained by Kittaneh and Moradi in \cite{KM}.
\end{abstract}

\maketitle

 \section{Introduction}
Let $\mathscr{B}\left(\mathscr{H}\right)$ be the Banach algebra
of all bounded linear operators defined on a complex Hilbert space
$\left( \mathscr{H} ;\left\langle \cdot ,\cdot \right\rangle
\right)$  with the identity operator  $1_\mathscr {H}$ in
$\mathscr{B}\left(\mathscr{H}\right)$. For a bounded linear operator $T$ on a Hilbert space
$\mathscr {H}$, the numerical range $W\left(T\right)$ is the image
of the unit sphere of $\mathscr {H}$ under the quadratic form $x\to
\left\langle {Tx,x} \right\rangle$ associated with the operator.
More precisely,
$
W\left( T \right) = \left\{ {\left\langle {Tx,x} \right\rangle :x
	\in \mathscr {H},\left\| x \right\| = 1} \right\}.
$
Also, the numerical radius is defined to be
\begin{align*}
w\left( T \right) = \mathop {\sup }\limits_{\lambda \in W\left( T \right)}  |\lambda|
= \mathop {\sup }\limits_{\left\|
	x \right\| = 1} \left| {\left\langle {Tx,x} \right\rangle }
\right|.
\end{align*}

We recall that,  the usual operator norm of an operator $T$ is defined to be
\begin{align*}
\left\| T \right\|  = \sup \left\{ {\left\| {Tx} \right\|:x \in \mathscr {H},\left\| x \right\| = 1} \right\},
\end{align*}

It's well known that the numerical radius is not submultiplicative, but it  satisfies  $$w(TS)\le 4w\left(T\right) w\left(S\right)$$ for all $T,S\in {\mathscr B}(\mathscr H)$.  In particular if $T,S$ commute,  then $$w(TS)\le 2w\left(T\right) w\left(S\right).$$
Moreover, if $T,S$ are normal  then $w\left(\cdot\right)$ is submultiplicative $w(TS)\le w\left(T\right) w\left(S\right)$.

On the other hand, it is well known that $w\left(\cdot\right)$ defines an operator norm on $\mathscr{B}\left(\mathscr{H}\right)$ which is equivalent to the operator norm $\|\cdot\|$. Moreover, we have
\begin{align}
\frac{1}{2}\|T\|\le w\left(T\right) \le \|T\|\label{eq1.1}
\end{align}
for any $T\in \mathscr{B}\left(\mathscr{H}\right)$.

In 2003, Kittaneh \cite{K1}  refined the right-hand side of \eqref{eq1.1}, by showing that
\begin{align}
w\left(T\right) \le \frac{1}{2}\left\||T|+|T^*|\right\|\le \frac{1}{2}\left(\|T\|+\|T^2\|^{1/2}\right)\label{eq1.2}
\end{align}
for any  $T\in \mathscr{B}\left(\mathscr{H}\right)$.

After that, in \cite{K2}  the same author proved
\begin{align}
\frac{1}{4}\|T^*T+TT^*\|\le  w^2\left(T\right) \le \frac{1}{2}\|T^*T+TT^*\|\label{eq1.3}
\end{align}
for any  $T\in \mathscr{B}\left(\mathscr{H}\right)$. These inequalities are sharp. 

It is well known that, the first inequality in \eqref{eq1.2} is sharper than the second inequality in \eqref{eq1.1}, and the inequalities in \eqref{eq1.3}
refine the inequalities in \eqref{eq1.1}. It is easy to note that, the inequality \eqref{eq1.2} refines the
second inequality in \eqref{eq1.3}.

In \cite{D}, Dragomir  proved the following estimate  of the numerical radius of the product of two Hilbert space operators.
\begin{align}
\omega ^r \left( {S^* T} \right) \le \frac{1}{2}\left\| {\left| T\right|^{2r}  + \left| S \right|^{2r} } \right\|  \qquad (r\ge1).   \label{eq1.4} 
\end{align}

In \cite{EF}, El-Hadad and Kittaneh established two important results that generalize  \eqref{eq1.2} and \eqref{eq1.3}; respectively, as follow:
	\begin{align}
	\omega^{p}\left(T\right) \le \frac{1}{2}\left\| \left| T \right|^{2p\alpha }+\left| {T^* } \right|^{2p\left( {1 - \alpha } \right)}\right\|.\label{eq1.5}
	\end{align}
 and
	\begin{align}
	\label{eq1.6}\omega^{2p}\left(T\right) \le  \left\|\alpha \left| T \right|^{2p }+\left( {1 - \alpha } \right)\left| {T^* } \right|^{2p}\right\|.
	\end{align}
for any $T \in \mathscr{B}\left(\mathscr{H}\right)$, where $0\le \alpha \le 1$  and $p\ge1$.

In his work \cite{Alomari3}, the author of this paper established two refinements of the first inequality in \eqref{eq1.2} and the right-hand side inequality in \eqref{eq1.3}; respectively, by proving that
 \begin{align*}
\omega ^2 \left( T \right) \le \frac{1}{4}\left\| {\left| T \right| + \left| {T^* } \right|} \right\|^2  - \frac{1}{4}\inf \left\langle {\left( {\left| T \right| - \left| {T^* } \right|} \right)x,x} \right\rangle ^2, 
\end{align*}
and
\begin{align*}
\omega ^2 \left( T \right) \le \frac{1}{2}\left\| {T^* T + TT^* } \right\|^2  - \frac{1}{2}\inf \left\langle {\left( {\left| T \right| - \left| {T^* } \right|} \right)x,x} \right\rangle ^2.   
\end{align*}

The Cauchy--Schwarz inequality states that for all vectors $x$ and $y$ in an inner
product space
\begin{align}
\left| {\left\langle {x,y} \right\rangle } \right|    \le     \left\| x \right\|  \left\| y \right\| \label{eq1.7}
\end{align}
where $\left\langle {\cdot,\cdot} \right\rangle$ is the inner product and $\left\|x\right\|=\sqrt{\left\langle {x,x} \right\rangle}$.

Recently,  Kittaneh and Moradi \cite{KM} established the following refinement of \eqref{eq1.7}.
\begin{align}
\left| {\left\langle {x,y} \right\rangle } \right|^2  \le \left| {\left\langle {x,y} \right\rangle } \right|\left\| x \right\|\left\| y \right\| + \frac{1}{2}\left( {\left\| x \right\|^2 \left\| y \right\|^2  - \left| {\left\langle {x,y} \right\rangle } \right|^2 } \right) \le \left\| x \right\|^2 \left\| y \right\|^2. 
\label{eq1.8}
\end{align}
By employing \eqref{eq1.8}, the authors in \cite{KM} established the inequality
\begin{align}
\omega ^2 \left( {S^* T} \right) \le \frac{1}{6}\left\| {\left| T\right|^4  + \left| S \right|^4 } \right\| + \frac{1}{3}\omega \left( {S^*T} \right)\left\| {\left|T \right|^2  + \left| S \right|^2 } \right\|\le \frac{1}{2}  \left\| {\left| T\right|^{4}  + \left| S \right|^{4} } \right\| \label{eq1.9} 
\end{align}
for all Hilbert space operators $T,S\in   {\mathscr{B}}\left(\mathscr H\right) $. It should be noted that, the inequality \eqref{eq1.9} refines \eqref{eq1.4} when $r=2$.

On the other hand, the classical Schwarz inequality for positive operators reads that if $T\in \mathscr{B}\left(\mathscr{H}\right) $ is a positive operator, then
\begin{align}
\left| {\left\langle {Tx,y} \right\rangle} \right|  ^2  \le
\left\langle {T x,x} \right\rangle \left\langle { T y,y}
\right\rangle  \label{eq1.10}
\end{align}
for any   vectors $x,y\in \mathscr{H}$.

In 1952, Kato  \cite{TK} introduced a companion of
Schwarz inequality \eqref{eq1.10}, sometimes known as the Kato's inequality or the so called mixed Schwarz inequality,  which
asserts:
\begin{align}
\label{eq1.11}\left| {\left\langle {Tx,y} \right\rangle} \right|  ^2  \le
\left\langle {\left| T \right|^{2\alpha } x,x} \right\rangle
\left\langle {\left| {T^* } \right|^{2\left( {1 - \alpha }
		\right)} y,y} \right\rangle, \qquad 0\le \alpha \le 1 
\end{align}
for all operators $T\in   {\mathscr{B}}\left(\mathscr H\right) $ and any vectors $x,y\in \mathscr {H}$. In particular, we have
\begin{align}
\left| {\left\langle {Tx,x} \right\rangle} \right|   \le \sqrt{
	\left\langle {\left| T \right| x,x} \right\rangle
	\left\langle {\left| {T^* } \right|  x,x} \right\rangle}.\label{eq1.12}
\end{align}

The following refinement of \eqref{eq1.12} was proved in \cite{KM}:
\begin{align}
\left| {\left\langle {Tx,x} \right\rangle } \right|^2  &\le \frac{1}{3} \left\langle {\left| T \right|x,x} \right\rangle   \left\langle {\left| {T^* } \right|x,x} \right\rangle   + \frac{2}{3}\left| {\left\langle {Tx,x} \right\rangle } \right|\sqrt {\left| {\left\langle {\left| T \right|x,x} \right\rangle } \right|\left| {\left\langle {\left| {T^* } \right|x,x} \right\rangle } \right|}  \label{eq1.13}
\\ 
&\le \left\langle {\left| T \right|x,x} \right\rangle  \left\langle {\left| {T^* } \right|x,x} \right\rangle. \nonumber
\end{align}
As a direct application of \eqref{eq1.13}, the authors of \cite{KM} established the inequality
 \begin{align}
\omega ^2 \left( { T} \right) \le \frac{1}{6}\left\| {\left| T \right|^2  + \left| T^* \right|^2 } \right\| + \frac{1}{3}\omega \left( { T} \right)\left\| {\left| T \right|  + \left| T^* \right| } \right\| \le  \frac{1}{2}\left\| {\left| T \right|^2  + \left| T^* \right|^2 } \right\|\label{eq1.14}
\end{align}
 for any Hilbert space operator $T \in \mathscr{B}\left(\mathscr{H}\right)$, which indeed refines the right-hand side of \eqref{eq1.3}. 
 
 It should mentioned that, another refinement of \eqref{eq1.11} was also presented in the recent work  \cite{Alomari1}. Also,  a Cartesian decomposition version of Kato's type was obtained in \cite{Alomari2}. \\
 
Another direct and simple proof of \eqref{eq1.14}, can be deduced using \eqref{eq1.2} and \eqref{eq1.3}, as obtained in the following steps:
  \begin{align*}
  \omega ^2 \left( { T} \right)  =\frac{1}{3} \omega ^2 \left( { T} \right)+ \frac{2}{3}\omega^2 \left( { T} \right)
  &\le \frac{1}{3} \left(\frac{1}{2}\left\| {\left| T \right|^2  + \left| T^* \right|^2 } \right\| \right)+ \frac{2}{3}\omega  \left( { T} \right)\omega  \left( { T} \right)
   \\
  &\le \frac{1}{3} \left(\frac{1}{2}\left\| {\left| T \right|^2  + \left| T^* \right|^2 } \right\| \right)+ \frac{2}{3}\omega  \left( { T} \right)  \left( { \frac{1}{2}\left\| {\left| T \right|  + \left| T^* \right| } \right\|} \right)
  \\
  &= \frac{1}{6}\left\| {\left| T \right|^2  + \left| T^* \right|^2 } \right\| + \frac{1}{3}\omega \left( { T} \right)\left\| {\left| T \right|  + \left| T^* \right| } \right\|.
 \end{align*}
 Moreover, by applying the first inequality in \eqref{eq1.2} for the second term in the first inequality in \eqref{eq1.14} and then using \eqref{eq2.3}, we get the second inequality in \eqref{eq1.14}. The same approach can be considered to give another proof of \eqref{eq1.9}; by applying the  inequality \eqref{eq1.4} with $r=2$.\\

 In this work, a refinement of the Cauchy--Schwarz inequality in inner product space is proved.  A refinement of the Kato's inequality or the so called mixed Schwarz inequality \eqref{eq1.11} is established. Refinements of some famous numerical radius inequalities are also pointed out. As shown in this work, these refinements are stronger than the recent result \eqref{eq1.9} and \eqref{eq1.14}, and therefore refine the previous ones \eqref{eq1.4}, \eqref{eq1.5}, and \eqref{eq1.6}.

\section{ Refinement of the Cuachy--Shwarz inequaity}
In order to prove our results we need  a sequence of lemmas.
\begin{lemma}\cite{MPF}
	\label{lemma1}   The Power-Mean inequality states that
		\begin{align}
		a^\alpha  b^{1 - \alpha }  \le \alpha a + \left( {1 - \alpha } \right)b \le \left( {\alpha a^p  + \left( {1 - \alpha } \right)b^p } \right)^{\frac{1}{p}}    \label{eq2.1}
		\end{align}
		for all  $\alpha \in \left[0,1\right]$, $a,b\ge0$ and $ p\ge1$.
	 \end{lemma}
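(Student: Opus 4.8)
The plan is to prove the two inequalities in \eqref{eq2.1} separately, each being a one-line consequence of a convexity (resp.\ concavity) property, once the trivial boundary situations are set aside: if $\alpha\in\{0,1\}$ both inequalities reduce to equalities, and if $a=0$ or $b=0$ the left inequality reads $0\le(1-\alpha)b$ (resp.\ $0\le\alpha a$) while the right one is just the corresponding statement about a single power, so from now on I would assume $a,b>0$ and $\alpha\in(0,1)$.

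For the left-hand inequality $a^\alpha b^{1-\alpha}\le\alpha a+(1-\alpha)b$ (the weighted arithmetic--geometric mean inequality), the cleanest argument uses the strict concavity of $\log$ on $(0,\infty)$: Jensen's inequality gives $\log\big(\alpha a+(1-\alpha)b\big)\ge\alpha\log a+(1-\alpha)\log b=\log\big(a^\alpha b^{1-\alpha}\big)$, and exponentiating, which is monotone, yields the claim. If one prefers to keep everything self-contained, both sides are positively homogeneous of degree one in $(a,b)$, so after dividing by $b$ it suffices to show $t^\alpha\le\alpha t+(1-\alpha)$ for all $t\ge0$; here $g(t):=\alpha t+(1-\alpha)-t^\alpha$ has $g(1)=0$ and $g'(t)=\alpha\big(1-t^{\alpha-1}\big)$, which is negative on $(0,1)$ and positive on $(1,\infty)$ because $\alpha-1<0$, so $t=1$ is the global minimum of $g$ and hence $g\ge0$.

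For the right-hand inequality $\alpha a+(1-\alpha)b\le\big(\alpha a^p+(1-\alpha)b^p\big)^{1/p}$ I would use the convexity of $\phi(t)=t^p$ on $[0,\infty)$ when $p\ge1$ (since $\phi''(t)=p(p-1)t^{p-2}\ge0$). Jensen's inequality then gives $\big(\alpha a+(1-\alpha)b\big)^p\le\alpha a^p+(1-\alpha)b^p$, and raising both sides to the power $1/p$, an increasing operation on $[0,\infty)$, completes the argument; concatenating the two chains of estimates gives \eqref{eq2.1}.

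I do not expect any genuine obstacle here: this is the classical comparison between the weighted geometric, arithmetic and $p$-th power means (see \cite{MPF}), and each inequality is a single invocation of Jensen's inequality. The only points deserving a line of care are the degenerate cases listed above and, in the calculus version of the first inequality, the fact that $g'(t)\to-\infty$ as $t\to0^+$ (consistent with $g$ being decreasing on $(0,1)$), so that no minimum is lost at the left endpoint.
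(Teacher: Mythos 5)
Your argument is correct. Note that the paper itself does not prove Lemma \ref{lemma1} at all: inequality \eqref{eq2.1} is simply quoted from the reference \cite{MPF} as a classical fact, so there is no in-paper proof to compare against. What you give is the standard textbook derivation — the weighted AM--GM inequality from concavity of $\log$ (or, equivalently, your homogeneity-plus-calculus argument for $t^\alpha\le\alpha t+(1-\alpha)$), and the arithmetic-mean versus $p$-th power mean comparison from convexity of $t\mapsto t^p$ via Jensen followed by taking $p$-th roots — and both steps are sound, with the degenerate cases $\alpha\in\{0,1\}$ and $ab=0$ correctly set aside. The only cosmetic remark is that for $1<p<2$ the second derivative formula $\phi''(t)=p(p-1)t^{p-2}$ is not defined at $t=0$; convexity of $t^p$ on all of $[0,\infty)$ follows from convexity on $(0,\infty)$ together with continuity at $0$, which is worth a half-sentence if you want the write-up airtight.
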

\begin{lemma} \cite[Theorem 1.4]{FMPS}
	\label{lemma2}  Let  $T\in \mathscr{B}\left( \mathscr{H}\right)^+ $, then
	\begin{align}
	\left\langle {Tx,x} \right\rangle ^p  \le \left\langle {T^p x,x} \right\rangle, \qquad p\ge1  \label{eq2.2}
	\end{align}
	for any vector $x\in\mathscr{H}$. The inequality \eqref{eq2.2} is reversed if $0\le p\le 1$.
\end{lemma}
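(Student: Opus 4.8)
The statement of Lemma~\ref{lemma2} is the Hölder--McCarthy inequality, and the plan is to deduce it from the scalar Jensen inequality via the spectral theorem. First I would reduce to the case $\|x\|=1$: replacing $x$ by $x/\|x\|$ brings the general assertion to this one (for a non-unit vector one in fact picks up a factor $\|x\|^{2p-2}$, but in every application made in this paper $x$ is a unit vector, which is what is intended here). Since $T\in\mathscr{B}\left(\mathscr{H}\right)^{+}$ is bounded and positive, $\sigma(T)$ is a compact subset of $[0,\infty)$, and writing $T=\int_{\sigma(T)}\lambda\,dE(\lambda)$ for its spectral decomposition I would introduce the Borel measure $\mu_{x}(\cdot)=\left\langle E(\cdot)x,x\right\rangle$ on $\sigma(T)$. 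The key observation is that $\mu_{x}$ has total mass $\left\langle x,x\right\rangle=1$, so it is a \emph{probability} measure, and by the spectral calculus
\begin{align*}
\left\langle Tx,x\right\rangle=\int_{\sigma(T)}\lambda\,d\mu_{x}(\lambda),\qquad \left\langle T^{p}x,x\right\rangle=\int_{\sigma(T)}\lambda^{p}\,d\mu_{x}(\lambda).
\end{align*}

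The second step is to apply Jensen's inequality to $\varphi(t)=t^{p}$. On $[0,\infty)$ this function is convex exactly when $p\ge1$, so
\begin{align*}
\left\langle Tx,x\right\rangle^{p}=\varphi\!\left(\int_{\sigma(T)}\lambda\,d\mu_{x}(\lambda)\right)\le\int_{\sigma(T)}\varphi(\lambda)\,d\mu_{x}(\lambda)=\left\langle T^{p}x,x\right\rangle,
\end{align*}
which is \eqref{eq2.2}. For $0\le p\le1$ the function $t\mapsto t^{p}$ is concave on $[0,\infty)$, so the identical computation runs with the inequality reversed, giving $\left\langle Tx,x\right\rangle^{p}\ge\left\langle T^{p}x,x\right\rangle$; this is the asserted reverse inequality.

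There are alternative routes one could take: for instance, the case $p=2$ is immediate from Cauchy--Schwarz, since $\left\langle Tx,x\right\rangle\le\|Tx\|\,\|x\|=\left\langle T^{2}x,x\right\rangle^{1/2}$ for a unit vector $x$ (using $T=T^{*}$), and squaring yields \eqref{eq2.2} with $p=2$; more generally one can invoke the Davis--Choi--Jensen operator Jensen inequality applied to the positive unital functional $A\mapsto\left\langle Ax,x\right\rangle$. I do not expect a genuine obstacle here: once $\mu_{x}$ is recognized as a probability measure, the whole statement collapses to the classical scalar Jensen inequality, and the boundedness of $T$ (hence compactness of $\sigma(T)$) removes any integrability concern. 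The only point requiring a moment's care is the convexity/concavity dichotomy of $t\mapsto t^{p}$ about $p=1$, which is precisely what produces the two directions in the lemma.
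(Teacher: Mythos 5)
Your proof is correct. The paper does not prove this lemma at all --- it is quoted from \cite[Theorem 1.4]{FMPS} (the H\"older--McCarthy inequality) --- and your spectral-theorem-plus-Jensen argument is exactly the standard proof of that cited result: for a unit vector $x$ the measure $\mu_x(\cdot)=\left\langle E(\cdot)x,x\right\rangle$ is a probability measure supported on the compact set $\sigma(T)\subset[0,\infty)$, and convexity (resp.\ concavity) of $t\mapsto t^p$ for $p\ge 1$ (resp.\ $0\le p\le 1$) gives the two directions. Your caveat about normalization is also well taken: as literally stated ``for any vector $x$'' the inequality fails (take $T=1_\mathscr{H}$, $\|x\|>1$, $p>1$), and the correct general form carries the factor $\|x\|^{2(p-1)}$; since the paper only ever applies \eqref{eq2.2} to unit vectors, this does not affect anything downstream.
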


\begin{lemma}\cite[Theorem 2.3]{AS}
Let $f$   be a non-negative convex function on $\left[0,\infty\right)$, and let $T,S\in  \mathscr{B}\left( \mathscr{H}\right)$ be two positive operators. Then, 
\begin{align}
\left\|f\left(\frac{T+S}{2}\right)\right\|\le \left\|\frac{f\left(T\right)+f\left(S\right)}{2}\right\|.\label{eq2.3}
\end{align} 
\end{lemma}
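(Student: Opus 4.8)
\emph{Proof idea.} The plan is to reduce the operator-norm inequality to a one–variable fact about supporting lines of the convex function $f$, and then to push that fact through the continuous functional calculus by testing on approximate eigenvectors. Write $A:=\frac{T+S}{2}$. Since $T,S\ge 0$ we have $A\ge 0$, and since $f\ge 0$ (and, as is standard in this setting, continuous) all three operators $f(A),f(T),f(S)$ are positive; hence each of the two norms in the statement is the supremum of the associated quadratic form over unit vectors, and by the spectral mapping theorem (using $f\ge 0$) $\|f(A)\|=\max\{f(\lambda):\lambda\in\sigma(A)\}$, the maximum being attained because $\sigma(A)\subseteq[0,\infty)$ is compact.

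First I would fix $\lambda_0\in\sigma(A)$ with $f(\lambda_0)=\|f(A)\|$. Assuming for the moment $\lambda_0>0$, convexity of $f$ provides a supporting line at $\lambda_0$, i.e. a number $c$ (any $c\in[f'_-(\lambda_0),f'_+(\lambda_0)]$) with
\[
\ell(t):=f(\lambda_0)+c\,(t-\lambda_0)\ \le\ f(t)\qquad\text{for all }t\ge 0 .
\]
Since $\sigma(T),\sigma(S)\subseteq[0,\infty)$, applying the order–preserving functional calculus to this scalar inequality gives $\ell(T)\le f(T)$ and $\ell(S)\le f(S)$, and because $\ell$ is affine,
\[
\frac{f(T)+f(S)}{2}\ \ge\ \frac{\ell(T)+\ell(S)}{2}\ =\ \ell\!\left(\frac{T+S}{2}\right)\ =\ f(\lambda_0)\,1_{\mathscr H}+c\,\bigl(A-\lambda_0 1_{\mathscr H}\bigr).
\]
Finally, since $A$ is self-adjoint and $\lambda_0\in\sigma(A)$, choose unit vectors $x_n$ with $\|(A-\lambda_0 1_{\mathscr H})x_n\|\to 0$; evaluating the previous operator inequality in the state $x_n$ and using $|\langle (A-\lambda_0 1_{\mathscr H})x_n,x_n\rangle|\le\|(A-\lambda_0 1_{\mathscr H})x_n\|\to 0$ we obtain
\[
\left\|\frac{f(T)+f(S)}{2}\right\|\ \ge\ \Bigl\langle\tfrac{f(T)+f(S)}{2}x_n,x_n\Bigr\rangle\ \ge\ \langle\ell(A)x_n,x_n\rangle\ \longrightarrow\ f(\lambda_0)=\|f(A)\| ,
\]
which is the desired inequality.

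The point I expect to require the most care — the main obstacle — is the boundary case $\lambda_0=0$, in which $f$ need only possess a one–sided derivative $f'_+(0)\in[-\infty,\infty)$ and a genuine supporting line through $(0,f(0))$ may fail to exist (e.g. $f(t)=1-\sqrt{t}$ near $0$). I would dispose of it by a limiting argument: for small $\varepsilon>0$ use instead the supporting line $\ell_\varepsilon(t):=f(\varepsilon)+f'_+(\varepsilon)(t-\varepsilon)$ at the interior point $\varepsilon$, run the three displays above verbatim with $\ell_\varepsilon$ in place of $\ell$ to get $\bigl\|\tfrac{f(T)+f(S)}{2}\bigr\|\ge\ell_\varepsilon(\lambda_0)=\ell_\varepsilon(0)=f(\varepsilon)-\varepsilon f'_+(\varepsilon)$, and then observe that the monotonicity of chord slopes gives $f(\varepsilon)-f(0)\le\varepsilon f'_+(\varepsilon)\le f(2\varepsilon)-f(\varepsilon)$, so $\varepsilon f'_+(\varepsilon)\to 0$ and $\ell_\varepsilon(0)\to f(0)=f(\lambda_0)$ as $\varepsilon\to 0^+$ by continuity of $f$ at $0$; letting $\varepsilon\to 0^+$ finishes the proof. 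If one is content to assume $f$ differentiable, or if the maximum of $f$ over $\sigma(A)$ can be located at an interior point, this case is vacuous and the argument is just the three displays. The remaining ingredients — $\|f(A)\|=\max_{\sigma(A)}f$, the spectral mapping theorem, and the implication $\ell\le f$ on $\sigma(B)\Rightarrow\ell(B)\le f(B)$ for positive $B$ — are routine consequences of the spectral theorem for bounded self-adjoint operators.
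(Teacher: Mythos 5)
Your argument is correct, and it is worth noting that the paper itself offers no proof of this lemma: it is quoted verbatim with a citation to Aujla--Silva \cite{AS}, where the inequality is obtained (in the matrix setting, for all unitarily invariant norms) from eigenvalue weak-majorization results for convex functions. Your route is different and more elementary: you exploit the fact that only the operator norm is claimed, so that $\left\|f\left(\frac{T+S}{2}\right)\right\|=\max_{\lambda\in\sigma\left(\frac{T+S}{2}\right)}f(\lambda)$, pick a supporting line $\ell\le f$ at a maximizing spectral point $\lambda_0$, use order-preservation of the functional calculus plus affinity of $\ell$ to get $\frac{f(T)+f(S)}{2}\ge \ell\left(\frac{T+S}{2}\right)$, and then test against Weyl approximate eigenvectors for $\lambda_0$. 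All of these steps are sound, the quadratic-form estimate at the end only uses self-adjointness, and your treatment of the boundary case $\lambda_0=0$ (replacing the possibly nonexistent supporting line at $0$ by the one at $\varepsilon$ and using the chord-slope squeeze $f(\varepsilon)-f(0)\le\varepsilon f'_+(\varepsilon)\le f(2\varepsilon)-f(\varepsilon)$) is exactly the right fix. What your proof buys is a self-contained argument valid for bounded operators on an infinite-dimensional Hilbert space, under the (necessary, and correctly flagged) standing assumption that $f$ is continuous so that the continuous functional calculus and the spectral mapping theorem apply; what it does not give is the stronger conclusion of the cited theorem (weak majorization, hence all unitarily invariant norms in finite dimensions), but only the operator-norm case, which is all the paper ever uses in \eqref{eq2.3}. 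Minor quibble: your illustrative example $f(t)=1-\sqrt{t}$ is not non-negative on all of $[0,\infty)$ (take $\max\{1-\sqrt{t},0\}$ instead), but this does not affect the argument.
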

 
\begin{lemma}
For all vectors $x$ and $y$ in an inner
product space, we have	
\begin{align}
 \left| {\left\langle {x,y} \right\rangle } \right|^2    \le  \left(1-\beta\right)\left| {\left\langle {x,y} \right\rangle } \right|\left\| x \right\|\left\| y \right\| + \beta \left\| x \right\|^2 \left\| y \right\|^2\le  \left\| x \right\|^2 \left\| y \right\|^2   \label{eq2.4}
\end{align}
for all $\beta\in \left[0,1\right]$.
\end{lemma}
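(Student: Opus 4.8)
The plan is to reduce everything to the two scalar quantities $a:=\left|\left\langle x,y\right\rangle\right|$ and $b:=\left\|x\right\|\left\|y\right\|$. The Cauchy--Schwarz inequality \eqref{eq1.7} says precisely that $0\le a\le b$, and the claimed chain \eqref{eq2.4} then becomes the purely numerical statement
\[
a^{2}\le\left(1-\beta\right)ab+\beta b^{2}\le b^{2},\qquad \beta\in\left[0,1\right].
\]
Once this reduction is made there is nothing operator-theoretic left, so I do not expect any genuine obstacle; the whole point is a one-line convexity observation.

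For the left inequality I would note that $a\le b$ forces both $a^{2}\le ab$ and $a^{2}\le b^{2}$; taking the convex combination of these with weights $1-\beta$ and $\beta$ gives
\[
a^{2}=\left(1-\beta\right)a^{2}+\beta a^{2}\le\left(1-\beta\right)ab+\beta b^{2},
\]
which is the first inequality in \eqref{eq2.4}. Equivalently, one can simply factor $a^{2}-\left(1-\beta\right)ab-\beta b^{2}=\left(a-b\right)\left(a+\beta b\right)\le 0$, a product of a nonpositive and a nonnegative factor. For the right inequality I would again invoke $a\le b$, hence $ab\le b^{2}$, so that
\[
\left(1-\beta\right)ab+\beta b^{2}\le\left(1-\beta\right)b^{2}+\beta b^{2}=b^{2}.
\]
Substituting back $a=\left|\left\langle x,y\right\rangle\right|$ and $b=\left\|x\right\|\left\|y\right\|$ finishes the argument.

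Two remarks on how this sits in the paper. The degenerate case $b=0$ (that is, $x=0$ or $y=0$) makes every term vanish and the inequalities hold trivially, so it can be handled in one sentence or simply absorbed, since the estimates above remain valid with $a=b=0$. Also, taking $\beta=\tfrac12$ in \eqref{eq2.4} recovers exactly the Kittaneh--Moradi refinement \eqref{eq1.8}, after rewriting $\left(1-\tfrac12\right)ab+\tfrac12 b^{2}=ab+\tfrac12\left(b^{2}-a^{2}\right)$; I would point this out right after the proof, so the reader sees that \eqref{eq2.4} is a genuine one-parameter family that can later be optimized over $\beta$ in the numerical-radius applications.
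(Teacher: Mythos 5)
Your proof is correct and is essentially the paper's own argument: the same splitting $a^{2}=(1-\beta)a^{2}+\beta a^{2}$ followed by Cauchy--Schwarz \eqref{eq1.7} applied to each term, merely rewritten in the scalar variables $a=\left|\left\langle x,y\right\rangle\right|$, $b=\left\|x\right\|\left\|y\right\|$. One correction to your closing remark, though: $\beta=\tfrac12$ does \emph{not} recover \eqref{eq1.8}; the claimed identity $\tfrac12 ab+\tfrac12 b^{2}=ab+\tfrac12\left(b^{2}-a^{2}\right)$ is false in general, and \eqref{eq1.8} rearranges to $a^{2}\le\tfrac23 ab+\tfrac13 b^{2}$, i.e.\ it corresponds to the choice $\beta=\tfrac13$ in \eqref{eq2.4}, which is exactly the specialization the paper uses later.
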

\begin{proof}
Applying the Cauchy--Schwarz inequality \eqref{eq1.7}, we have
\begin{align*}
\left| {\left\langle {x,y} \right\rangle } \right|^2   =\left(1-\beta\right)\left| {\left\langle {x,y} \right\rangle } \right|^2+ \beta \left| {\left\langle {x,y} \right\rangle } \right|^2  
&\le \left(1-\beta\right)\left| {\left\langle {x,y} \right\rangle } \right|\left\| x \right\|\left\| y \right\| + \beta \left\| x \right\|^2 \left\| y \right\|^2   
\\ 
&\le \left(1-\beta\right) \left\| x \right\|^2 \left\| y \right\|^2 + \beta \left\| x \right\|^2 \left\| y \right\|^2
\\ 
&\le  \left\| x \right\|^2 \left\| y \right\|^2, 
\end{align*}
 as required.
 \end{proof}

The following result generalize and refine the Kato's inequality \eqref{eq1.11}, which in turn generalizes \eqref{eq1.13}.
\begin{lemma}
\label{lemma5}	Let $T,S\in \mathscr{B}\left(\mathscr{H}\right)$, $0\le \alpha\le1$ and $p\ge1$. Then	
\begin{align}
\left| {\left\langle {Tx,y} \right\rangle } \right|^{2p}  &\le \beta \left\langle {\left| T \right|^{2p\alpha } x,x} \right\rangle \left\langle {\left| {T^* } \right|^{2p\left( {1 - \alpha } \right)} y,y} \right\rangle  + \left( {1 - \beta } \right)\left| {\left\langle {Tx,y} \right\rangle } \right|^p\sqrt {\left\langle {\left| T \right|^{2p\alpha } x,x} \right\rangle \left\langle {\left| {T^* } \right|^{2p\left( {1 - \alpha } \right)} y,y} \right\rangle }  \label{eq2.5}
\\ 
&\le \left\langle {\left| T \right|^{2p\alpha } x,x} \right\rangle \left\langle {\left| {T^* } \right|^{2p\left( {1 - \alpha } \right)} y,y} \right\rangle. \nonumber
\end{align}
\end{lemma}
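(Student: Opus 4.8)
The strategy is to deduce \eqref{eq2.5} from the abstract Cauchy–Schwarz refinement \eqref{eq2.4} by choosing the vectors in \eqref{eq2.4} cleverly, exactly as in the classical derivation of Kato's inequality \eqref{eq1.11} from the ordinary Cauchy–Schwarz inequality. Recall that Kato's inequality follows by writing, for a bounded operator $T$ with polar-type decomposition, $\left\langle Tx,y\right\rangle = \left\langle \left|T\right|^{\alpha}x,\,\left|T^*\right|^{1-\alpha}y\right\rangle$ (this identity is the core of Kato's 1952 argument via the Heinz inequality / the formula $T = \left|T^*\right|^{1-\alpha}U\left|T\right|^{\alpha}$-type manipulations). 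Applying the ordinary Cauchy–Schwarz inequality to the pair $u := \left|T\right|^{\alpha}x$, $v := \left|T^*\right|^{1-\alpha}y$ gives $\left|\left\langle Tx,y\right\rangle\right|^2 \le \left\|u\right\|^2\left\|v\right\|^2 = \left\langle \left|T\right|^{2\alpha}x,x\right\rangle\left\langle \left|T^*\right|^{2(1-\alpha)}y,y\right\rangle$. The plan is to run the refined inequality \eqref{eq2.4} on the same pair $u,v$ instead.

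First I would record the operator identity $\left\langle Tx,y\right\rangle = \left\langle \left|T\right|^{\alpha}x,\,\left|T^*\right|^{1-\alpha}y\right\rangle$ for $0\le\alpha\le1$; this is standard and may be quoted from Kato's original paper or from the references \cite{TK}, \cite{Alomari1} already cited. Then, setting $u=\left|T\right|^{\alpha}x$ and $v=\left|T^*\right|^{1-\alpha}y$ so that $\left\|u\right\|^2 = \left\langle\left|T\right|^{2\alpha}x,x\right\rangle$ and $\left\|v\right\|^2 = \left\langle\left|T^*\right|^{2(1-\alpha)}y,y\right\rangle$, inequality \eqref{eq2.4} with parameter $\beta$ yields
\begin{align*}
\left|\left\langle Tx,y\right\rangle\right|^2 \le (1-\beta)\left|\left\langle Tx,y\right\rangle\right|\sqrt{\left\langle\left|T\right|^{2\alpha}x,x\right\rangle\left\langle\left|T^*\right|^{2(1-\alpha)}y,y\right\rangle} + \beta\left\langle\left|T\right|^{2\alpha}x,x\right\rangle\left\langle\left|T^*\right|^{2(1-\alpha)}y,y\right\rangle.
\end{align*}
This is precisely \eqref{eq2.5} in the case $p=1$ (with the chained upper bound $\le \left\langle\left|T\right|^{2\alpha}x,x\right\rangle\left\langle\left|T^*\right|^{2(1-\alpha)}y,y\right\rangle$ coming from the second inequality in \eqref{eq2.4}).

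To get the general exponent $p\ge1$, I would raise the $p=1$ inequality to the $p$-th power. Writing $A := \left\langle\left|T\right|^{2\alpha}x,x\right\rangle\left\langle\left|T^*\right|^{2(1-\alpha)}y,y\right\rangle$ and $m := \left|\left\langle Tx,y\right\rangle\right|^2 \le A$, the $p=1$ step says $m \le \beta A + (1-\beta)\sqrt{m}\sqrt{A} = \big(\beta\sqrt{A} + (1-\beta)\sqrt{m}\big)\sqrt{A}$. Taking $p$-th powers and using super-multiplicativity/convexity carefully — here one wants $m^p \le \big(\beta\sqrt{A}+(1-\beta)\sqrt{m}\big)^p A^{p/2}$ and then $\big(\beta\sqrt A + (1-\beta)\sqrt m\big)^p \le \beta A^{p/2} + (1-\beta) m^{p/2}$, which is just the concavity of $t\mapsto t^{p/2}$... but $p/2$ may exceed $1$, so concavity fails; instead one should use the opposite direction, namely that for the map in question we need an upper bound, so I would instead keep the factored form and invoke the Power-Mean / convexity inequality \eqref{eq2.1} in the direction $\big(\beta a + (1-\beta) b\big) \le \big(\beta a^{?} + \cdots\big)$ only where the exponent is $\le 1$. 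The cleanest route is: from $m\le \beta A + (1-\beta)\sqrt{mA}$ and $\sqrt{mA}=\sqrt m\sqrt A$ with $\sqrt m \le \sqrt A$, substitute the crude bound once to linearize, OR — and this is what I would actually do — observe that the desired conclusion for exponent $p$ reads $m^p \le \beta A^p + (1-\beta) m^{p/2} A^{p/2}$, and this follows from the $p=1$ case $m \le \beta A + (1-\beta) m^{1/2} A^{1/2}$ by raising both sides to the power $p$ and applying the elementary inequality $(\beta s + (1-\beta) t)^p \le \beta s^p + (1-\beta) t^p$ with $s = A$, $t = m^{1/2}A^{1/2}$ — wait, that inequality goes the wrong way for $p\ge1$ (it's $\ge$).

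So the genuine obstacle — and the step I expect to be the crux — is precisely the passage from $p=1$ to general $p\ge1$, because convexity of $t\mapsto t^p$ points the wrong way. The correct device is Lemma~\ref{lemma2} (the operator inequality \eqref{eq2.2}): I would \emph{not} raise the scalar inequality to a power, but rather re-run the vector argument from the start with $\alpha$ replaced appropriately and the exponent built in, i.e. apply the identity $\left\langle Tx,y\right\rangle$-decomposition together with \eqref{eq2.2} to replace $\left\langle\left|T\right|^{2\alpha}x,x\right\rangle^p$ by a bound involving $\left\langle\left|T\right|^{2p\alpha}x,x\right\rangle$. Concretely: by \eqref{eq2.2}, $\left\langle\left|T\right|^{2\alpha}x,x\right\rangle^p \le \left\langle\left|T\right|^{2p\alpha}x,x\right\rangle$ and likewise for $\left|T^*\right|$, and also $\left|\left\langle Tx,y\right\rangle\right|^{2p} = \big(\left|\left\langle Tx,y\right\rangle\right|^2\big)^p$. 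Starting from the $p=1$ inequality, raise to the $p$-th power, then bound the mixed term using the identity $\left(\beta a+(1-\beta)b\right)^p\le \beta a^p+(1-\beta)b^p$... no. The clean way: apply \eqref{eq2.4} directly with $x\leftarrow \left|T\right|^{p\alpha}x$, $y\leftarrow \left|T^*\right|^{p(1-\alpha)}y$ is not available because $\left\langle Tx,y\right\rangle$ does not decompose through those powers. Hence I would instead use: $\left|\left\langle Tx,y\right\rangle\right|^{2} \le \left\langle\left|T\right|^{2\alpha}x,x\right\rangle^{1/?}\cdots$ — ultimately the honest fix is to first establish $\left|\left\langle Tx,y\right\rangle\right|^2 \le \left\langle\left|T\right|^{2\alpha}x,x\right\rangle\left\langle\left|T^*\right|^{2(1-\alpha)}y,y\right\rangle$ with the refinement, then raise to power $p$, then apply \eqref{eq2.2} in the form $\left\langle\left|T\right|^{2\alpha}x,x\right\rangle^{p}\le\left\langle\left|T\right|^{2p\alpha}x,x\right\rangle$ to each occurrence — checking that every term on the right of the raised-to-$p$ inequality is a product of such powers, and that the scalar algebra $(\beta A + (1-\beta)\sqrt{mA})^p$ expands/bounds correctly via the convexity inequality \eqref{eq2.1} applied to exponent $1/p\le1$ (so in the favorable direction). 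I expect this bookkeeping — making the exponents land on $2p\alpha$ and $2p(1-\alpha)$ while keeping the middle cross-term with its factor $\left|\left\langle Tx,y\right\rangle\right|^p$ — to be the only real content; the rest is the two-line Cauchy–Schwarz substitution above.
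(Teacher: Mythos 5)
Your final plan is viable, and it is a genuinely different route from the paper's. The paper never touches the polar decomposition: it takes Kato's inequality \eqref{eq1.11} and the power inequality \eqref{eq2.2} as black boxes and verifies directly, at exponent $p$ from the start, that the middle expression in \eqref{eq2.5} is sandwiched --- it dominates $\beta \left| {\left\langle {Tx,y} \right\rangle } \right|^{2p}+(1-\beta)\left| {\left\langle {Tx,y} \right\rangle } \right|^{p}\left| {\left\langle {Tx,y} \right\rangle } \right|^{p}$ because \eqref{eq2.2} lets one pass from $\left\langle {\left| T \right|^{2p\alpha } x,x} \right\rangle$ down to $\left\langle {\left| T \right|^{2\alpha } x,x} \right\rangle^{p}$ and then \eqref{eq1.11} finishes (that is exactly where the step displayed as an equality in \eqref{eq2.6} uses Kato), and it is dominated by the product because $\left| {\left\langle {Tx,y} \right\rangle } \right|^{p}\le\sqrt{\left\langle {\left| T \right|^{2p\alpha } x,x} \right\rangle\left\langle {\left| {T^*} \right|^{2p(1-\alpha) } y,y} \right\rangle}$, again by Kato plus \eqref{eq2.2}. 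You instead re-derive the $p=1$ refinement from the scalar inequality \eqref{eq2.4} via the polar-decomposition substitution and then power it up with \eqref{eq2.1} and \eqref{eq2.2}. Your version is more self-contained (it re-proves the refined Kato inequality rather than quoting \eqref{eq1.11}) at the cost of the power-raising bookkeeping; the paper's is shorter because it leans on \eqref{eq1.11} directly.

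Two points in your write-up need repair before it is a proof. First, the identity $\left\langle {Tx,y} \right\rangle=\left\langle {\left|T\right|^{\alpha}x,\left|T^{*}\right|^{1-\alpha}y} \right\rangle$ is false as stated: take $T$ unitary and nontrivial, so $\left|T\right|=\left|T^{*}\right|=1_{\mathscr H}$ but $\left\langle {Tx,y} \right\rangle\ne\left\langle {x,y} \right\rangle$ in general. The correct form, from $T=U\left|T\right|$ and $\left|T^{*}\right|^{1-\alpha}=U\left|T\right|^{1-\alpha}U^{*}$, is $\left\langle {Tx,y} \right\rangle=\left\langle {U\left|T\right|^{\alpha}x,\left|T^{*}\right|^{1-\alpha}y} \right\rangle$; since $\|U\left|T\right|^{\alpha}x\|\le\|\left|T\right|^{\alpha}x\|$, applying \eqref{eq2.4} to this pair still yields the $p=1$ case. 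Second, the ``obstacle'' you agonize over is not an obstacle: the inequality $\left(\beta s+(1-\beta)t\right)^{p}\le\beta s^{p}+(1-\beta)t^{p}$ for $s,t\ge0$ and $p\ge1$ is precisely the second inequality in \eqref{eq2.1} (convexity of $t\mapsto t^{p}$), so it goes in the direction you need; your parenthetical claim that it is reversed is wrong, and your eventual ``fix'' via \eqref{eq2.1} with exponent $1/p$ is the same inequality. With these corrections the argument closes: writing $A=\left\langle {\left| T \right|^{2\alpha } x,x} \right\rangle\left\langle {\left| {T^*} \right|^{2(1-\alpha) } y,y} \right\rangle$ and $m=\left| {\left\langle {Tx,y} \right\rangle } \right|^{2}$, the $p=1$ case gives $m\le\beta A+(1-\beta)\sqrt{mA}$, raising to the $p$-th power and using \eqref{eq2.1} gives $m^{p}\le\beta A^{p}+(1-\beta)m^{p/2}A^{p/2}$, and \eqref{eq2.2} converts $A^{p}$ and $A^{p/2}$ into the stated quantities with exponents $2p\alpha$ and $2p(1-\alpha)$; the same bounds (via $m\le A$) give the second inequality in \eqref{eq2.5}, which you should also record for general $p$, not only at $p=1$.
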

\begin{proof}
Using \eqref{eq2.2}, one can easily obtained that	
\begin{align} 
&\beta  \left\langle {\left| T \right|^{2p\alpha } x,x} \right\rangle  \left\langle {\left| {T^* } \right|^{2p\left( {1 - \alpha } \right)} y,y} \right\rangle  + \left( {1 - \beta } \right)\left| {\left\langle {Tx,y} \right\rangle } \right|^p\sqrt {\left\langle {\left| T \right|^{2p\alpha } x,x} \right\rangle \left\langle {\left| {T^* } \right|^{2p\left( {1 - \alpha } \right)} y,y} \right\rangle } 
\nonumber\\
&\ge\beta  \left\langle {\left| T \right|^{2\alpha } x,x} \right\rangle^p \left\langle {\left| {T^* } \right|^{2\left( {1 - \alpha } \right)} y,y} \right\rangle ^p  + \left( {1 - \beta } \right)\left| {\left\langle {Tx,y} \right\rangle } \right|^p\sqrt {\left\langle {\left| T \right|^{2\alpha } x,x} \right\rangle^p \left\langle {\left| {T^* } \right|^{2\left( {1 - \alpha } \right)} y,y} \right\rangle^p } \label{eq2.6} 
\\ 
&= \beta \left| {\left\langle {Tx,y} \right\rangle } \right|^{2p}   + \left( {1 - \beta } \right)\left| {\left\langle {Tx,y} \right\rangle } \right|^{p} \left| {\left\langle {Tx,y} \right\rangle } \right|^{p}  
\nonumber\\ 
&= \left| {\left\langle {Tx,y} \right\rangle } \right|^{2p}   \nonumber
\end{align}
for all $\beta \in \left[0,1\right]$ and $p\ge1$.

On the other hand, we have
\begin{align}
&\beta \left\langle {\left| T \right|^{2p\alpha } x,x} \right\rangle \left\langle {\left| {T^* } \right|^{2p\left( {1 - \alpha } \right)} y,y} \right\rangle  + \left( {1 - \beta } \right)\left| {\left\langle {Tx,y} \right\rangle } \right|^p\sqrt {\left\langle {\left| T \right|^{2p\alpha } x,x} \right\rangle \left\langle {\left| {T^* } \right|^{2p\left( {1 - \alpha } \right)} y,y} \right\rangle }  
\nonumber\\ 
&\le \beta \left\langle {\left| T \right|^{2p\alpha } x,x} \right\rangle  \left\langle {\left| {T^* } \right|^{2p\left( {1 - \alpha } \right)} x,x} \right\rangle  
\nonumber\\
&\qquad\qquad+ \left( {1 - \beta } \right)\sqrt {\left\langle {\left| T \right|^{2p\alpha } x,x} \right\rangle \left\langle {\left| {T^* } \right|^{2p\left( {1 - \alpha } \right)} y,y} \right\rangle } \sqrt {\left\langle {\left| T \right|^{2p\alpha } x,x} \right\rangle \left\langle {\left| {T^* } \right|^{2p\left( {1 - \alpha } \right)} y,y} \right\rangle } \label{eq2.7}
\\ 
&= \beta \left\langle {\left| T \right|^{2p\alpha } x,x} \right\rangle \left\langle {\left| {T^* } \right|^{2p\left( {1 - \alpha } \right)} y,y} \right\rangle  + \left( {1 - \beta } \right)\left\langle {\left| T \right|^{2p\alpha } x,x} \right\rangle \left\langle {\left| {T^* } \right|^{2p\left( {1 - \alpha } \right)} y,y} \right\rangle  
\nonumber\\ 
&= \left\langle {\left| T \right|^{2p\alpha } x,x} \right\rangle \left\langle {\left| {T^* } \right|^{2p\left( {1 - \alpha } \right)} y,y} \right\rangle.\nonumber
\end{align}
Combining \eqref{eq2.6} and \eqref{eq2.7}, we infer that
\begin{align*}
\left| {\left\langle {Tx,y} \right\rangle } \right|^{2p}  &\le \beta \left\langle {\left| T \right|^{2p\alpha } x,x} \right\rangle \left\langle {\left| {T^* } \right|^{2p\left( {1 - \alpha } \right)} y,y} \right\rangle  + \left( {1 - \beta } \right)\left| {\left\langle {Tx,y} \right\rangle } \right|^p\sqrt {\left\langle {\left| T \right|^{2p\alpha } x,x} \right\rangle \left\langle {\left| {T^* } \right|^{2p\left( {1 - \alpha } \right)} y,y} \right\rangle }  
\\ 
&\le \left\langle {\left| T \right|^{2p\alpha } x,x} \right\rangle \left\langle {\left| {T^* } \right|^{2p\left( {1 - \alpha } \right)} y,y} \right\rangle \nonumber
\end{align*}
for any $p\ge1$, which proves \eqref{eq2.5}.
\end{proof}
 
\begin{remark}
Setting $y=x$ in \eqref{eq2.5}, and then choosing $p=1$, $\alpha=\frac{1}{2}$ and $\beta= \frac{1}{3}$, we refer to \eqref{eq1.13}. More generally, for $p=1$ the inequality \eqref{eq2.5} refines the classical Kato's inequality \eqref{eq1.11}.
\end{remark}

\begin{remark}
	Let $T,S\in \mathscr{B}\left(\mathscr{H}\right) $, then by Cauchy--Schwarz inequality \eqref{eq1.7}  we have
	\begin{align*}
	\left| {\left\langle {Tx,Sy} \right\rangle} \right|  \le \left\|Tx\right\|\left\|Sy\right\|  
 =\left\langle {T^*T x,x} \right\rangle^{\frac{1}{2}} \left\langle { S^*S y,y}
	\right\rangle^{\frac{1}{2}} 
 =\left\langle {\left|T\right|^2 x,x} \right\rangle^{\frac{1}{2}} \left\langle { \left|S\right|^2 y,y}
	\right\rangle^{\frac{1}{2}}. 
	\end{align*}
	Hence,
	\begin{align*}
	\left| {\left\langle {Tx,Sy} \right\rangle} \right|^2  \le\left\langle {\left|T\right|^2 x,x} \right\rangle  \left\langle { \left|S\right|^2 y,y}\right\rangle. 
	\end{align*}
	for any   vectors $x,y\in \mathscr{H}$. In particular, for $S=T^*$, we get
	\begin{align*}
	\left| {\left\langle {T^2x,y} \right\rangle} \right|^2  \le\left\langle {\left|T\right|^2 x,x} \right\rangle  \left\langle { \left|T^*\right|^2 y,y}\right\rangle. 
	\end{align*}
	Moreover,   one can esaily observe the following  refinement of \eqref{eq1.10},
	\begin{align*}
	\left| {\left\langle {Tx,Sy} \right\rangle } \right|^{2}  &\le \beta \left\langle {\left| T \right|^{2} x,x} \right\rangle \left\langle {\left| {S } \right|^{2} y,y} \right\rangle  + \left( {1 - \beta } \right)\left| {\left\langle {Tx,Sy} \right\rangle } \right| \sqrt {\left\langle {\left| T \right|^{2 } x,x} \right\rangle \left\langle {\left| {S} \right|^{2} y,y} \right\rangle }  
	\\ 
	&\le \left\|Tx\right\|^2\left\|Sy\right\|^2  
	\end{align*}
	for any   vectors $x,y\in \mathscr{H}$ and all $\beta\in\left[0,1\right]$.
\end{remark}
\section{Applications to numerical radius inequalities}

\subsection{Numerical radius inequalities for a product of two Hilbert space operators}

The following result generalizes the Kittaneh--Moradi inequality \eqref{eq1.9}.
\begin{theorem}
\label{thm1}	Let $T,S\in \mathscr{B}\left(\mathscr{H}\right)$. Then
	\begin{align}
	\omega ^{2r} \left( {S^* T} \right) \le \frac{1}{2} \left(1-\beta\right)  \omega^r \left( {S^*T} \right)\left\| {\left|T \right|^{2r}  + \left| S \right|^{2r} } \right\|  +\frac{1}{2}\beta \left\| {\left| T\right|^{4r}  + \left| S \right|^{4r} } \right\| \le \frac{1}{2} \left\| {\left| T\right|^{4r}  + \left| S \right|^{4r} } \right\| \label{eq3.1}
	\end{align}
	for all $\beta\in\left[0,1\right]$ and $r\ge1$.
\end{theorem}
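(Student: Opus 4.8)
The plan is to mimic the structure of the proof of \eqref{eq1.9} in \cite{KM}, but to feed in the sharper two–term bound \eqref{eq2.5} from Lemma~\ref{lemma5} in place of the Kittaneh--Moradi inequality \eqref{eq1.13}. First I would fix a unit vector $x\in\mathscr{H}$ with $\|x\|=1$ and apply \eqref{eq2.5} with $y$ replaced by $x$, with the operator $T$ replaced by $S^*T$, with $\alpha=\tfrac12$, and with the given $\beta\in[0,1]$ and $p=r\ge1$. Since $\bigl|S^*T\bigr|^{2}=T^*SS^*T$ is not directly what we want, the cleaner route is actually to go back one step: apply the mixed-Schwarz-type estimate in the form that arises from $\left|\langle S^*Tx,x\rangle\right|=\left|\langle Tx,Sx\rangle\right|\le\|Tx\|\,\|Sx\|=\langle|T|^2x,x\rangle^{1/2}\langle|S|^2x,x\rangle^{1/2}$, exactly as in the second Remark after Lemma~\ref{lemma5}. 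That Remark already records
\[
\left|\langle Tx,Sx\rangle\right|^{2}\le\beta\,\langle|T|^2x,x\rangle\langle|S|^2x,x\rangle+(1-\beta)\left|\langle Tx,Sx\rangle\right|\sqrt{\langle|T|^2x,x\rangle\langle|S|^2x,x\rangle},
\]
and the $r$-power analogue (raising the basic Cauchy--Schwarz estimate to the power $r$ and then splitting $1=\beta+(1-\beta)$ as in the proof of \eqref{eq2.4}--\eqref{eq2.5}) gives
\[
\left|\langle S^*Tx,x\rangle\right|^{2r}\le\beta\,\langle|T|^{2}x,x\rangle^{r}\langle|S|^{2}x,x\rangle^{r}+(1-\beta)\left|\langle S^*Tx,x\rangle\right|^{r}\sqrt{\langle|T|^{2}x,x\rangle^{r}\langle|S|^{2}x,x\rangle^{r}}.
\]

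Next I would eliminate the powers $\langle|T|^2x,x\rangle^{r}$ in favour of $\langle|T|^{2r}x,x\rangle$ using Lemma~\ref{lemma2} (the McCarthy/Hölder operator inequality \eqref{eq2.2}), which is legitimate since $|T|^{2}\ge0$ and $r\ge1$; likewise for $S$. This turns the right-hand side into
\[
\beta\,\langle|T|^{2r}x,x\rangle\langle|S|^{2r}x,x\rangle+(1-\beta)\left|\langle S^*Tx,x\rangle\right|^{r}\sqrt{\langle|T|^{2r}x,x\rangle\langle|S|^{2r}x,x\rangle}.
\]
Then I would apply the elementary AM--GM bound $ab\le\tfrac14(a+b)^2$ and $\sqrt{ab}\le\tfrac12(a+b)$ with $a=\langle|T|^{2r}x,x\rangle$, $b=\langle|S|^{2r}x,x\rangle$, to dominate this by
\[
\frac{\beta}{4}\bigl(\langle(|T|^{2r}+|S|^{2r})x,x\rangle\bigr)^{2}+\frac{1-\beta}{2}\left|\langle S^*Tx,x\rangle\right|^{r}\langle(|T|^{2r}+|S|^{2r})x,x\rangle.
\]
Now bound $\langle(|T|^{2r}+|S|^{2r})x,x\rangle\le\bigl\||T|^{2r}+|S|^{2r}\bigr\|$ in the second term and $\left|\langle S^*Tx,x\rangle\right|^{r}\le\omega^{r}(S^*T)$, and in the first term write $\bigl(\langle(|T|^{2r}+|S|^{2r})x,x\rangle\bigr)^{2}\le\langle(|T|^{2r}+|S|^{2r})^{2}x,x\rangle\le\bigl\|(|T|^{2r}+|S|^{2r})^{2}\bigr\|=\bigl\||T|^{2r}+|S|^{2r}\bigr\|^{2}$ by \eqref{eq2.2} again. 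Finally, taking the supremum over unit vectors $x$ and invoking the convexity inequality \eqref{eq2.3} (with $f(t)=t^{2}$, $T\mapsto|T|^{4r}$, $S\mapsto|S|^{4r}$, which gives $\bigl\|\tfrac12(|T|^{4r}+|S|^{4r})\bigr\|\ge\bigl\|\bigl(\tfrac12(|T|^{2r}+|S|^{2r})\bigr)^{2}\bigr\|=\tfrac14\bigl\||T|^{2r}+|S|^{2r}\bigr\|^{2}$) to replace $\tfrac14\bigl\||T|^{2r}+|S|^{2r}\bigr\|^{2}$ by $\tfrac12\bigl\||T|^{4r}+|S|^{4r}\bigr\|$, yields the first inequality of \eqref{eq3.1}. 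The second inequality of \eqref{eq3.1} then follows from the first by using $\omega^{r}(S^*T)\le\tfrac12\bigl\||T|^{2r}+|S|^{2r}\bigr\|$ (this is \eqref{eq1.4}) together with the same \eqref{eq2.3}-based estimate $\tfrac14\bigl\||T|^{2r}+|S|^{2r}\bigr\|^{2}\le\tfrac12\bigl\||T|^{4r}+|S|^{4r}\bigr\|$, so that both summands are bounded by $\tfrac12\beta'\bigl\||T|^{4r}+|S|^{4r}\bigr\|$ with the $\beta$-weights adding to $1$.

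The main obstacle, and the step that needs the most care, is bookkeeping with the powers: one must be sure that Lemma~\ref{lemma2} is applied in the correct direction (it requires $p\ge1$, which is exactly the hypothesis $r\ge1$, and it is used on the positive operators $|T|^{2}$ and $|T|^{2r}+|S|^{2r}$, not on $S^*T$ itself) and that the scalar AM--GM steps are compatible with later taking a supremum over $x$ — in particular the term $\left|\langle S^*Tx,x\rangle\right|^{r}\langle(|T|^{2r}+|S|^{2r})x,x\rangle$ must be split as a product of two quantities each separately bounded by its sup, which is valid since both factors are nonnegative. A secondary point is that to get the clean constant $\tfrac12$ in \eqref{eq3.1} (rather than $\tfrac14$ of a squared norm) one genuinely needs the Aujla--Silva convexity inequality \eqref{eq2.3}; without it one would only obtain the weaker form with $\tfrac14\bigl\||T|^{2r}+|S|^{2r}\bigr\|^{2}$, so I would state the intermediate inequality first and then upgrade. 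Setting $\beta=\tfrac13$ and $r=1$ recovers \eqref{eq1.9}, which is the consistency check I would include as a remark afterwards.
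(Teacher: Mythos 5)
Your proposal is correct and follows essentially the same route as the paper: split off the $\beta$-weighted Cauchy--Schwarz term (the paper does this by applying \eqref{eq2.4} with $x=Tu$, $y=Su$ and then lifting to the power $r$ via the power-mean inequality \eqref{eq2.1}, while you split directly at the power $2r$), then use \eqref{eq2.2}, AM--GM, the supremum over unit vectors, and finally \eqref{eq1.4} together with \eqref{eq2.3} for the second inequality. The only cosmetic differences are that you treat the $\beta$-term via $ab\le\tfrac14(a+b)^2$ and pass through $\tfrac14\beta\left\| |T|^{2r}+|S|^{2r}\right\|^2$ before invoking \eqref{eq2.3} at the norm level, whereas the paper uses $ab\le\tfrac12(a^2+b^2)$ and \eqref{eq2.2} pointwise to reach $\tfrac12\beta\left\| |T|^{4r}+|S|^{4r}\right\|$ directly; also your parenthetical substitution in \eqref{eq2.3} should read $T\mapsto |T|^{2r}$, $S\mapsto |S|^{2r}$ (so that $f(T)=|T|^{4r}$), a harmless slip since the inequality you actually use is the right one.
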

\begin{proof}
	Let $u\in \mathscr{H}$ be a unit vector. Setting $x=Tu$ and $y=Su$ in the first inequality in \eqref{eq2.4}, we get
	\begin{align*}
	\left| {\left\langle {Tu,Su} \right\rangle } \right|^2 &= \left| {\left\langle {S^*Tu,u} \right\rangle } \right|^2   
	\\
	&\le  \left(1-\beta\right)\left| {\left\langle {Tu,Su} \right\rangle } \right|\left\| Tu \right\|\left\| Su\right\| + \beta \left\| Tu \right\|^2 \left\|S u\right\|^2 
	\\
	&=  \left(1-\beta\right)\left| {\left\langle {S^*Tu,u} \right\rangle } \right|
	\left\langle {\left| T \right|^2 u,u} \right\rangle ^{\frac{1}{2}}  \left\langle {\left| S \right|^2 u,u} \right\rangle ^{\frac{1}{2}} + \beta  \left\langle {\left| T \right|^2 u,u} \right\rangle   \left\langle {\left| S \right|^2 u,u} \right\rangle. 
	\end{align*}
	Empolying the power mean inequality \eqref{eq2.1}, we have
	\begin{align*}
	\left| {\left\langle {S^*Tu,u} \right\rangle } \right|^2  \le \left(   \left(1-\beta\right)\left| {\left\langle {S^*Tu,u} \right\rangle } \right|^r
	\left\langle {\left| T \right|^2 u,u} \right\rangle ^{\frac{r}{2}}  \left\langle {\left| S \right|^2 u,u} \right\rangle ^{\frac{r}{2}} + \beta  \left\langle {\left| T \right|^2 u,u} \right\rangle^r   \left\langle {\left| S \right|^2 u,u} \right\rangle^r\right)^{\frac{1}{r}}, 
	\end{align*}
	which implies that
	\begin{align*}
	&\left| {\left\langle {S^*Tu,u} \right\rangle } \right|^{2r}  
	\\
	&\le     \left(1-\beta\right)\left| {\left\langle {S^*Tu,u} \right\rangle } \right|^r
	\left\langle {\left| T \right|^2 u,u} \right\rangle ^{\frac{r}{2}}  \left\langle {\left| S \right|^2 u,u} \right\rangle ^{\frac{r}{2}} + \beta  \left\langle {\left| T \right|^2 u,u} \right\rangle^r   \left\langle {\left| S \right|^2 u,u} \right\rangle^r 
	\\
	&\le\left(1-\beta\right)\left| {\left\langle {S^*Tu,u} \right\rangle } \right|^r
	\left\langle {\left| T \right|^{2r}u,u} \right\rangle ^{\frac{1}{2}}  \left\langle {\left| S \right|^{2r} u,u} \right\rangle ^{\frac{1}{2}} + \beta  \left\langle {\left| T \right|^{2r} u,u} \right\rangle    \left\langle {\left| S \right|^{2r} u,u} \right\rangle \qquad  \qquad \qquad\text{(by \eqref{eq2.2})}
	\\
	&\le \frac{1}{2} \left(1-\beta\right)\left| {\left\langle {S^*Tu,u} \right\rangle } \right|^r
	\left(\left\langle {\left| T \right|^{2r} u,u} \right\rangle  +  \left\langle {\left| S \right|^{2r} u,u} \right\rangle \right) + \frac{1}{2} \beta \left( \left\langle {\left| T \right|^{4r} u,u} \right\rangle +   \left\langle {\left| S \right|^{4r} u,u} \right\rangle  \right)  \qquad \text{(by \eqref{eq2.1})}
	\\
	&= \frac{1}{2} \left(1-\beta\right)\left| {\left\langle {S^*Tu,u} \right\rangle } \right|^r
	\left\langle {\left( \left| T \right|^{2r}+\left| S \right|^{2r} \right) u,u} \right\rangle    + \frac{1}{2} \beta  \left\langle {\left( \left| T \right|^{4r} +\left| S \right|^{4r} \right)u,u} \right\rangle. 
	\end{align*}
	Taking the supremum over all unit vector $u\in \mathscr{H}$ we get the firt inequality in \eqref{eq3.1}.
	
	To prove the second inequality in \eqref{eq3.1}, simply we have
	\begin{align*}
	\omega ^{2r} \left( {S^* T} \right) &\le \frac{1}{2} \left(1-\beta\right)  \omega^r \left( {S^*T} \right)\left\| {\left|T \right|^{2r}  + \left| S \right|^{2r} } \right\|  +\frac{1}{2}\beta \left\| {\left| T\right|^{4r}  + \left| S \right|^{4r} } \right\|
	\\
	 &\le \frac{1}{2} \left(1-\beta\right)    \left( {\frac{1}{2} \left\| {\left|T \right|^{2r}  + \left| S \right|^{2r} } \right\| } \right)\left\| {\left|T \right|^{2r}  + \left| S \right|^{2r} } \right\|  +\frac{1}{2}\beta \left\| {\left| T\right|^{4r}  + \left| S \right|^{4r} } \right\| \qquad \text{(by \eqref{eq1.4})}
	 \\
	 &=\frac{1}{4} \left(1-\beta\right)   \left\| {\left|T \right|^{2r}  + \left| S \right|^{2r} } \right\|^2  +\frac{1}{2}\beta \left\| {\left| T\right|^{4r}  + \left| S \right|^{4r} } \right\|
	 \\
	 &=\frac{1}{4} \left(1-\beta\right)   \left\| { \left( \frac{\left|T \right|^{2r} +\left|S \right|^{2r}}{2}\right)^2    } \right\| +\frac{1}{2}\beta \left\| {\left| T\right|^{4r}  + \left| S \right|^{4r} } \right\|
	  \\
	 &\le\frac{1}{4} \left(1-\beta\right)   \left\| {\frac{\left(2\left|T \right|^{2r} \right)^2   + \left(2\left|S \right|^{2r} \right)^2 }{2} } \right\| +\frac{1}{2}\beta \left\| {\left| T\right|^{4r}  + \left| S \right|^{4r} } \right\|\qquad  \qquad \qquad\text{(by \eqref{eq2.3})}
	 \\
	 &=\frac{1}{2}  \left\| {\left| T\right|^{4r}  + \left| S \right|^{4r} } \right\|,
	\end{align*}
	which proves the second inequality in \eqref{eq3.1}. 
\end{proof}

\begin{remark}
	Clearly, by choosing $r=1$ and $\beta=\frac{1}{3}$ in \eqref{eq3.1} we refer to the  Kittaneh--Moradi inequality \eqref{eq1.9}.
	
\end{remark}
The following result refines \eqref{eq3.1} (hence, \eqref{eq1.9}) and at the same time refines and generalizes \eqref{eq1.4}. Indeed, the presented result is much better than the mentioned inequalities.
\begin{theorem}
	Let $T,S\in \mathscr{B}\left(\mathscr{H}\right)$,  $r\ge1$ and $ \beta\in \left[0,1\right]$. Then
		\begin{align}
	\omega^{2r}\left(S^* T\right) &\le \frac{1}{4}\beta \left\| {\left| T\right|^{2r}  + \left| S \right|^{2r} } \right\|^2 + \frac{1}{2}\left(1-\beta\right)\omega^{r}\left(T\right)\left\| {\left| T\right|^{2r}  + \left| S \right|^{2r} } \right\|\label{eq3.2}
	\\
	&\le\frac{1}{2}\beta \left\| { \left| T \right|^{4r} +  \left| S \right|^{4r} } \right\|  + \frac{1}{2}\left(1-\beta\right)\omega^{r}\left(T\right)\left\| {\left| T\right|^{2r}  + \left| S \right|^{2r} } \right\|\nonumber
	\\
	&\le \frac{1}{2}  \left\| { \left| T \right|^{4r} +  \left| S \right|^{4r} } \right\|. \nonumber
	\end{align}
\end{theorem}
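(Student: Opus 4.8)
The plan is to reuse the template that produced \eqref{eq3.1}, changing only the way the two resulting terms are majorised. I would fix a unit vector $u\in\mathscr{H}$ and substitute $x=Tu$, $y=Su$ into the first inequality of \eqref{eq2.4}. Since $\|Tu\|^2=\langle|T|^2u,u\rangle$, $\|Su\|^2=\langle|S|^2u,u\rangle$ and $\langle Tu,Su\rangle=\langle S^*Tu,u\rangle$, the left side becomes $|\langle S^*Tu,u\rangle|^2$ while the right side becomes a $\beta$-weighted convex combination of the product $\langle|T|^2u,u\rangle\langle|S|^2u,u\rangle$ and of $|\langle S^*Tu,u\rangle|$ times the geometric mean $\langle|T|^2u,u\rangle^{1/2}\langle|S|^2u,u\rangle^{1/2}$. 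Applying the Power--Mean inequality \eqref{eq2.1} to this convex combination and then raising both sides to the power $r$ gives $|\langle S^*Tu,u\rangle|^{2r}$ on the left and the $r$-th powers of the two terms on the right; Lemma~\ref{lemma2} then lets me push the exponent inside, replacing the powers of $\langle|T|^2u,u\rangle$ and $\langle|S|^2u,u\rangle$ by the corresponding quantities $\langle|T|^{2r}u,u\rangle$ and $\langle|S|^{2r}u,u\rangle$.

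The decisive step, where the argument departs from the proof of \eqref{eq3.1}, lies in how the two surviving terms are controlled. In the $(1-\beta)$ cross term I would dominate the geometric mean $\langle|T|^{2r}u,u\rangle^{1/2}\langle|S|^{2r}u,u\rangle^{1/2}$ by the arithmetic mean $\tfrac12\langle(|T|^{2r}+|S|^{2r})u,u\rangle$, and hence by $\tfrac12\|\,|T|^{2r}+|S|^{2r}\|$; taking the supremum over unit $u$ then reproduces the cross term of \eqref{eq3.2}. For the $\beta$ term I would \emph{not} split $\langle|T|^{2r}u,u\rangle\langle|S|^{2r}u,u\rangle$ into fourth powers as in \eqref{eq3.1}, but instead keep it intact via the elementary estimate $ab\le\tfrac14(a+b)^2$, obtaining $\tfrac14\langle(|T|^{2r}+|S|^{2r})u,u\rangle^2\le\tfrac14\|\,|T|^{2r}+|S|^{2r}\|^2$. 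Passing to the supremum over all unit vectors $u$ then delivers the first inequality in \eqref{eq3.2}.

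The last two estimates are operator-norm refinements no longer involving $u$. For the second inequality I would invoke the convexity bound \eqref{eq2.3}, with the non-negative convex function $f(t)=t^2$ applied to the positive operators $|T|^{2r}$ and $|S|^{2r}$; this rewrites $\tfrac14\|\,|T|^{2r}+|S|^{2r}\|^2=\bigl\|\bigl(\tfrac{|T|^{2r}+|S|^{2r}}{2}\bigr)^2\bigr\|$ and bounds it by $\tfrac12\|\,|T|^{4r}+|S|^{4r}\|$, while the cross term is carried along unchanged. For the third inequality I would estimate the numerical-radius factor in the cross term by Dragomir's inequality \eqref{eq1.4}, i.e.\ by $\tfrac12\|\,|T|^{2r}+|S|^{2r}\|$, turning that term into $\tfrac14(1-\beta)\|\,|T|^{2r}+|S|^{2r}\|^2$, and then apply \eqref{eq2.3} a second time to bound it by $\tfrac12(1-\beta)\|\,|T|^{4r}+|S|^{4r}\|$; adding the $\tfrac12\beta$ term collapses everything to $\tfrac12\|\,|T|^{4r}+|S|^{4r}\|$. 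The only genuinely delicate point I anticipate is the middle paragraph: one must retain the squared-sum form $\tfrac14\|\,|T|^{2r}+|S|^{2r}\|^2$ for the $\beta$ term rather than splitting it immediately, since precisely this choice sharpens \eqref{eq3.1} and lets the two applications of \eqref{eq2.3} together with \eqref{eq1.4} close the chain exactly at the constant $\tfrac12$.
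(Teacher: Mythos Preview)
Your argument is correct, but the paper takes a much shorter route for the first inequality. Instead of returning to unit vectors and rerunning the machinery of Theorem~\ref{thm1} (the refinement \eqref{eq2.4}, the Power--Mean step, Lemma~\ref{lemma2}, and the AM--GM estimates), the paper simply writes
\[
\omega^{2r}(S^{*}T)=\beta\,\omega^{2r}(S^{*}T)+(1-\beta)\,\omega^{r}(S^{*}T)\,\omega^{r}(S^{*}T)
\]
and applies Dragomir's bound \eqref{eq1.4} to each occurrence of $\omega^{r}(S^{*}T)$: once in both factors of the $\beta$ term (yielding $\tfrac14\|\,|T|^{2r}+|S|^{2r}\|^{2}$), and once to a single factor in the $(1-\beta)$ term (yielding the cross term). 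Thus the first inequality in \eqref{eq3.2} is exhibited as an immediate corollary of \eqref{eq1.4}, in three lines. Your approach instead re-proves \eqref{eq1.4} implicitly at the vector level, with the single change that the $\beta$ term is majorised via $ab\le\tfrac14(a+b)^{2}$ rather than $ab\le\tfrac12(a^{2}+b^{2})$; this is self-contained and makes the parallel with Theorem~\ref{thm1} explicit, but it is considerably longer. For the second inequality you and the paper agree (both use \eqref{eq2.3} with $f(t)=t^{2}$). For the third inequality the paper just cites \eqref{eq3.1}, whose second estimate is literally the same chain (\eqref{eq1.4} followed by \eqref{eq2.3}) that you spell out; so here the content is identical. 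Incidentally, both routes produce $\omega^{r}(S^{*}T)$ in the cross term, so the ``$\omega^{r}(T)$'' appearing in the statement is a typographical slip.
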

\begin{proof}
 For all $\beta\in \left[0,1\right]$, we have
	\begin{align*}
	\omega^{2r}\left(S^* T\right)&=\beta \omega^{2r}\left(S^* T\right) + \left(1-\beta\right)\omega^{2r}\left(S^* T\right)
	\\
	&= \beta \omega^{2r}\left(S^* T\right) + \left(1-\beta\right)\omega^{r}\left(S^* T\right)\omega^{r}\left(S^* T\right)
	\\
	&\le \frac{1}{4}\beta \left\| {\left| T\right|^{2r}  + \left| S \right|^{2r} } \right\|^2 + \frac{1}{2}\left(1-\beta\right)\omega^{r}\left(T\right)\left\| {\left| T\right|^{2r}  + \left| S \right|^{2r} } \right\|,
	\end{align*}
 	the last inequality follows from \eqref{eq1.4}, which proves the first inequality in \eqref{eq3.2}. To obtain that the second inequality in \eqref{eq3.2}, we employ \eqref{eq2.3}, to get
	\begin{align*}
	\omega^{2r}\left(T\right) 
	&\le \frac{1}{4}\beta \left\| {\left| T\right|^{2r}  + \left| S \right|^{2r} } \right\|^2 + \frac{1}{2}\left(1-\beta\right)\omega^{r}\left(T\right)\left\| {\left| T\right|^{2r}  + \left| S \right|^{2r} } \right\|  
	\\
	&= \frac{1}{4}\beta \left\| {\left(\frac{2\left| T\right|^{2r}  + 2\left| S \right|^{2r}} {2}\right)^2} \right\|  + \frac{1}{2}\left(1-\beta\right)\omega^{r}\left(T\right)\left\| {\left| T\right|^{2r}  + \left| S \right|^{2r} } \right\| 
	\\
	&\le \frac{1}{4}\beta \left\| {\frac{  \left(2\left| T \right|^{2r}\right)^2  + \left(2\left| S \right|^{2r}\right)^2}{2}} \right\|  + \frac{1}{2}\left(1-\beta\right)\omega^{r}\left(T\right)\left\| {\left| T\right|^{2r}  + \left| S \right|^{2r} } \right\|
	\\
	&= \frac{1}{2}\beta \left\| { \left| T \right|^{4r} +  \left| S \right|^{4r} } \right\|  + \frac{1}{2}\left(1-\beta\right)\omega^{r}\left(T\right)\left\| {\left| T\right|^{2r}  + \left| S \right|^{2r} } \right\|,
	\end{align*}
	which gives the second inequality in \eqref{eq3.2}. The third inequality in \eqref{eq3.2} follows from \eqref{eq3.1}.
\end{proof} 
The following result refines the Kittaneh--Moradi inequality \eqref{eq1.9}.
\begin{corollary}
	Let $T,S\in \mathscr{B}\left(\mathscr{H}\right)$. Then
	\begin{align*}
	\omega^{2}\left(S^* T\right) \le \frac{1}{12}  \left\| {\left| T\right|^{2}  + \left| S \right|^{2} } \right\|^2 + \frac{1}{3}\omega\left(T\right)\left\| {\left| T\right|^{2}  + \left| S \right|^{2} } \right\| 
	&\le\frac{1}{6} \left\| { \left| T \right|^{4} +  \left| S \right|^{4} } \right\|  + \frac{1}{3}\omega\left(T\right)\left\| {\left| T\right|^{2}  + \left| S \right|^{2} } \right\|\nonumber
	\\
	&\le \frac{1}{2}  \left\| { \left| T \right|^{4} +  \left| S \right|^{4} } \right\|. \nonumber
	\end{align*}
\end{corollary}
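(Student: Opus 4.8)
The plan is to obtain this corollary as nothing more than the special case $r=1$, $\beta=\frac{1}{3}$ of the three-term chain \eqref{eq3.2} established in the preceding theorem; no fresh idea is needed, only the bookkeeping of the numerical constants.

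Concretely, I would first put $r=1$ in \eqref{eq3.2}, so that the exponents $2r$ and $4r$ collapse to $2$ and $4$, and $\omega^{2r}(S^*T)$, $\omega^{r}(T)$ become $\omega^{2}(S^*T)$, $\omega(T)$. Then I would substitute $\beta=\frac{1}{3}$ and evaluate the three coefficients that appear: $\frac{1}{4}\beta=\frac{1}{12}$ in front of $\bigl\|\,|T|^{2}+|S|^{2}\,\bigr\|^{2}$ in the first bound; $\frac{1}{2}(1-\beta)=\frac{1}{3}$ in front of the summand $\omega(T)\bigl\|\,|T|^{2}+|S|^{2}\,\bigr\|$ common to the first two bounds; and $\frac{1}{2}\beta=\frac{1}{6}$ in front of $\bigl\|\,|T|^{4}+|S|^{4}\,\bigr\|$ in the second bound, the third bound $\frac{1}{2}\bigl\|\,|T|^{4}+|S|^{4}\,\bigr\|$ remaining unchanged. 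This reproduces the displayed inequalities verbatim.

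For context I would also recall why this chain refines the Kittaneh--Moradi inequality \eqref{eq1.9}: its middle member has the same shape as the intermediate bound in \eqref{eq1.9}, while the new leading bound is smaller, since for the positive operator $A=|T|^{2}+|S|^{2}$ one has $\|A\|^{2}=\|A^{2}\|$, whence, applying \eqref{eq2.3} to the convex function $f(t)=t^{2}$ and the positive operators $|T|^{2}$, $|S|^{2}$,
\[
\frac{1}{12}\bigl\|\,|T|^{2}+|S|^{2}\,\bigr\|^{2}=\frac{1}{3}\Bigl\|\Bigl(\frac{|T|^{2}+|S|^{2}}{2}\Bigr)^{2}\Bigr\|\le\frac{1}{3}\Bigl\|\frac{|T|^{4}+|S|^{4}}{2}\Bigr\|=\frac{1}{6}\bigl\|\,|T|^{4}+|S|^{4}\,\bigr\|,
\]
which is exactly the passage from the first to the second bound in the corollary.

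Since the whole statement is a direct substitution into an already-proved theorem, there is no genuine obstacle; the only thing that demands a moment's attention is carrying the constants correctly and recognising that the step linking the first two bounds is precisely the operator-convexity estimate \eqref{eq2.3} for $t\mapsto t^{2}$.
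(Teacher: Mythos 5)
Your proposal is correct and coincides with the paper's own proof, which also obtains the corollary by setting $r=1$ and $\beta=\tfrac{1}{3}$ in \eqref{eq3.2}; the constant bookkeeping $\tfrac{1}{4}\beta=\tfrac{1}{12}$, $\tfrac{1}{2}(1-\beta)=\tfrac{1}{3}$, $\tfrac{1}{2}\beta=\tfrac{1}{6}$ is exactly as you state. Your added remark identifying the passage from the first to the second bound as the estimate \eqref{eq2.3} applied to $t\mapsto t^{2}$ is accurate and matches the argument already used in the proof of the theorem containing \eqref{eq3.2}.
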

\begin{proof}
Setting $r=1$ and $\beta=\frac{1}{3}$ in \eqref{eq3.2}.
\end{proof}
\begin{remark}
For all vectors $x$ and $y$ in an inner
product space, we have	
\begin{align*}
\left| {\left\langle {x,y} \right\rangle } \right|     \le  \left(1-\beta\right)\sqrt{\left| {\left\langle {x,y} \right\rangle } \right|\left\| x \right\|\left\| y \right\|} + \beta \left\| x \right\|  \left\| y \right\| \le  \left\| x \right\|  \left\| y \right\|.
\end{align*}
The proof is similar to that one given for \eqref{eq2.4}.\\

Now, let $u\in \mathscr{H}$ be a unit vector. Setting $x=Tu$ and $y=Su$ in the above inequality and proceed as in the proof of Theorem  \ref{thm1}, one can easily observe that 
\begin{align*}
\omega ^{r} \left( {S^* T} \right) \le \frac{1}{\sqrt{2}} \left(1-\beta\right)  \omega^{\frac{r}{2}} \left( {S^*T} \right)\left\| {\left|T \right|^{2r}  + \left| S \right|^{2r} } \right\|^{\frac{1}{2}}  +\frac{1}{2}\beta \left\| {\left| T\right|^{2r}  + \left| S \right|^{2r} } \right\| \le \frac{1}{2} \left\| {\left| T\right|^{2r}  + \left| S \right|^{2r} } \right\|
\end{align*}
for all Hilbert space operators  $T,S\in \mathscr{B}\left(\mathscr{H}\right)$, where $\beta\in\left[0,1\right]$ and $r\ge1$.	Clearly, the previous inequality refines the Dragomir's inequality \eqref{eq1.4}. 
\end{remark}

\subsection{Numerical radius inequalities for Hilbert space operators}

\begin{theorem}
Let $T \in \mathscr{B}\left(\mathscr{H}\right)$. Then
\begin{align}
\omega^{2p}\left(T\right) \le \beta \left\|\alpha\left| T \right|^{2p } +\left( {1 - \alpha } \right)\left| {T^* } \right|^{2p}\right\| + \frac{1}{2}\left( {1 - \beta } \right)\omega^{p}\left(T\right) \left\| \left| T \right|^{2p\alpha }+\left| {T^* } \right|^{2p\left( {1 - \alpha } \right)}\right\|\label{eq3.3}
\end{align}
for all $p\ge1$ and $0\le \alpha,\beta \le 1$.
\end{theorem}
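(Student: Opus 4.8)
The plan is to follow verbatim the two-line strategy that the Introduction already used to re-derive \eqref{eq1.9} and \eqref{eq1.14}, but now feeding in the El-Hadad--Kittaneh estimates \eqref{eq1.5} and \eqref{eq1.6} in place of Kittaneh's \eqref{eq1.2} and \eqref{eq1.3}. First I split the left-hand side as
\[
\omega^{2p}\left(T\right)=\beta\,\omega^{2p}\left(T\right)+\left(1-\beta\right)\omega^{2p}\left(T\right)=\beta\,\omega^{2p}\left(T\right)+\left(1-\beta\right)\omega^{p}\left(T\right)\omega^{p}\left(T\right).
\]
To the first summand I apply \eqref{eq1.6}, which bounds $\omega^{2p}\left(T\right)$ by $\left\|\alpha\left|T\right|^{2p}+\left(1-\alpha\right)\left|T^{*}\right|^{2p}\right\|$; to exactly one of the two factors $\omega^{p}\left(T\right)$ in the second summand I apply \eqref{eq1.5}, which gives $\omega^{p}\left(T\right)\le\frac12\left\|\left|T\right|^{2p\alpha}+\left|T^{*}\right|^{2p\left(1-\alpha\right)}\right\|$. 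Adding the two resulting bounds produces \eqref{eq3.3} immediately, with no optimisation over $\alpha$ required since $\alpha$ is a free parameter on both sides of the asserted inequality and $0\le\alpha,\beta\le1$ is exactly the hypothesis.

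For completeness I would also indicate the self-contained route through the refined Kato inequality \eqref{eq2.5}. Take $y=x$ with $\left\|x\right\|=1$ in \eqref{eq2.5}; in the middle term bound $\left|\left\langle Tx,x\right\rangle\right|^{p}\le\omega^{p}\left(T\right)$ and $\sqrt{\left\langle\left|T\right|^{2p\alpha}x,x\right\rangle\left\langle\left|T^{*}\right|^{2p\left(1-\alpha\right)}x,x\right\rangle}\le\tfrac12\left\langle\left(\left|T\right|^{2p\alpha}+\left|T^{*}\right|^{2p\left(1-\alpha\right)}\right)x,x\right\rangle$ by the arithmetic--geometric mean inequality, and in the product term use Lemma \ref{lemma2} in its reversed form (valid since $0\le\alpha\le1$) to get $\left\langle\left|T\right|^{2p\alpha}x,x\right\rangle\left\langle\left|T^{*}\right|^{2p\left(1-\alpha\right)}x,x\right\rangle\le\left\langle\left|T\right|^{2p}x,x\right\rangle^{\alpha}\left\langle\left|T^{*}\right|^{2p}x,x\right\rangle^{1-\alpha}$, then the Power--Mean inequality \eqref{eq2.1} to dominate this by $\left\langle\left(\alpha\left|T\right|^{2p}+\left(1-\alpha\right)\left|T^{*}\right|^{2p}\right)x,x\right\rangle$. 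Taking the supremum over all unit vectors $x$ and using $\sup_{\left\|x\right\|=1}\left\langle Ax,x\right\rangle=\left\|A\right\|$ for positive $A$ then yields \eqref{eq3.3} again.

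This is a routine argument and I do not expect a genuine obstacle. The only point that needs a little care is in the second route: when one separates the $\beta$-term from the $\left(1-\beta\right)$-term after taking the supremum one only has subadditivity, $\sup_{x}\left(A\left(x\right)+B\left(x\right)\right)\le\sup_{x}A\left(x\right)+\sup_{x}B\left(x\right)$, not equality — but this is harmless since \eqref{eq3.3} carries the supremum on the left only. A secondary bookkeeping point is to keep the exponent identifications $\left|T\right|^{2p\alpha}=\left(\left|T\right|^{2p}\right)^{\alpha}$ and $\left|T^{*}\right|^{2p\left(1-\alpha\right)}=\left(\left|T^{*}\right|^{2p}\right)^{1-\alpha}$ consistent so that \eqref{eq2.1} and Lemma \ref{lemma2} are invoked within their stated parameter ranges.
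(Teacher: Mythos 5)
Both of your routes are correct. Your second, ``for completeness'' route is in fact the paper's own proof almost verbatim: set $y=x$ with $\left\|x\right\|=1$ in \eqref{eq2.5}, bound the $\beta$-term via the reversed form of Lemma \ref{lemma2} (exponents $\alpha,1-\alpha\in[0,1]$) followed by the Power--Mean inequality \eqref{eq2.1}, bound the $(1-\beta)$-term by the arithmetic--geometric mean step, and take the supremum over unit vectors; your explicit remarks about subadditivity of the supremum and about reading $\left|T\right|^{2p\alpha}$ as $\left(\left|T\right|^{2p}\right)^{\alpha}$ are points the paper passes over silently, so you are if anything more careful there. Your primary route is genuinely different: it obtains \eqref{eq3.3} in two lines by splitting $\omega^{2p}(T)=\beta\,\omega^{2p}(T)+(1-\beta)\,\omega^{p}(T)\,\omega^{p}(T)$ and quoting the El-Hadad--Kittaneh bounds \eqref{eq1.6} and \eqref{eq1.5} directly. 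This is valid, and the paper itself uses exactly this splitting device in its concluding section for closely related inequalities; what it buys is brevity, at the cost of showing that \eqref{eq3.3} is a formal consequence of known results rather than an application of the new refined Kato inequality \eqref{eq2.5}, which is the structural point the paper's self-contained argument is designed to make.
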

\begin{proof}
Let $x\in\mathscr{H}$ be a unit vector. Setting $y=x$ in \eqref{eq2.5}, it follows that 
\begin{align*}
\left| {\left\langle {Tx,x} \right\rangle } \right|^{2p}  &\le \beta \left\langle {\left| T \right|^{2p\alpha } x,x} \right\rangle \left\langle {\left| {T^* } \right|^{2p\left( {1 - \alpha } \right)} x,x} \right\rangle  
\\
&\qquad+ \left( {1 - \beta } \right)\left| {\left\langle {Tx,x} \right\rangle } \right|^p\sqrt {\left\langle {\left| T \right|^{2p\alpha } x,x} \right\rangle \left\langle {\left| {T^* } \right|^{2p\left( {1 - \alpha } \right)} x,x} \right\rangle }  
\\
&\le \beta \left\langle {\left| T \right|^{2p } x,x} \right\rangle^{\alpha} \left\langle {\left| {T^* } \right|^{2p} x,x} \right\rangle^{\left( {1 - \alpha } \right)} \qquad\qquad\qquad\qquad\qquad\qquad \qquad \text{(by \eqref{eq2.2})}
\\
&\qquad + \left( {1 - \beta } \right)\left| {\left\langle {Tx,x} \right\rangle } \right|^p \cdot  \frac{1}{2}\left(  {\left\langle {\left| T \right|^{2p\alpha } x,x} \right\rangle+ \left\langle {\left| {T^* } \right|^{2p\left( {1 - \alpha } \right)} x,x} \right\rangle }   \right)   \qquad \text{(by \eqref{eq2.1})}
\\
&\le \beta \left[\alpha\left\langle {\left| T \right|^{2p } x,x} \right\rangle +\left( {1 - \alpha } \right)\left\langle {\left| {T^* } \right|^{2p} x,x} \right\rangle  \right] \qquad\qquad\qquad\qquad\qquad\qquad   \text{(by \eqref{eq2.1})}
\\
&\qquad+ \frac{1}{2}\left( {1 - \beta } \right)\left| {\left\langle {Tx,x} \right\rangle } \right|^p    \left\langle {\left(\left| T \right|^{2p\alpha }+\left| {T^* } \right|^{2p\left( {1 - \alpha } \right)} \right)x,x} \right\rangle 
\\
&= \beta  \left\langle {\left(\alpha\left| T \right|^{2p } +\left( {1 - \alpha } \right)\left| {T^* } \right|^{2p}\right)x,x} \right\rangle    
\\
&\qquad + \frac{1}{2}\left( {1 - \beta } \right)\left| {\left\langle {Tx,x} \right\rangle } \right|^p    \left\langle {\left(\left| T \right|^{2p\alpha }+\left| {T^* } \right|^{2p\left( {1 - \alpha } \right)} \right)x,x} \right\rangle. 
\end{align*}
Taking the supremum over all unit vector $x\in \mathscr{H}$ we get the required result.
\end{proof}
\begin{remark}
Setting $\beta=\frac{1}{3}$, $\alpha=\frac{1}{2}$ and $p=1$ in \eqref{eq3.3} we get the first inequality in \eqref{eq1.14}. 
\end{remark}

The following result is more stronger than El-Hadad--Kittaneh inequality \eqref{eq1.6}.
\begin{theorem}
	Let $T \in \mathscr{B}\left(\mathscr{H}\right)$. Then
	\begin{align}
	\label{eq3.4}\omega^{2p}\left(T\right) &\le \beta \left\|\alpha\left| T \right|^{2p } +\left( {1 - \alpha } \right)\left| {T^* } \right|^{2p}\right\| +  \left( {1 - \beta } \right)\omega^{p}\left(T\right) \sqrt{\left\|\alpha\left| T \right|^{2p } +\left( {1 - \alpha } \right)\left| {T^* } \right|^{2p}\right\|}
	\\
	&\le  \left\|\alpha\left| T \right|^{2p } +\left( {1 - \alpha } \right)\left| {T^* } \right|^{2p}\right\|   \nonumber 
	\end{align}
	for all $p\ge1$ and $0\le \alpha,\beta \le 1$.
\end{theorem}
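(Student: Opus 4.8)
The plan is to specialize Lemma \ref{lemma5} to the diagonal $y=x$ with $\|x\|=1$ and then collapse the two mixed quantities into a single convex combination, just as in the derivation of \eqref{eq1.14}. First I would set $y=x$ in \eqref{eq2.5}, which gives
\[
|\langle Tx,x\rangle|^{2p}\le\beta\langle|T|^{2p\alpha}x,x\rangle\langle|T^{*}|^{2p(1-\alpha)}x,x\rangle+(1-\beta)|\langle Tx,x\rangle|^{p}\sqrt{\langle|T|^{2p\alpha}x,x\rangle\langle|T^{*}|^{2p(1-\alpha)}x,x\rangle}.
\]
Since $p\ge1$ the operators $|T|^{2p}$ and $|T^{*}|^{2p}$ are positive, and since $\alpha,1-\alpha\in[0,1]$ the \emph{reversed} form of \eqref{eq2.2} yields $\langle|T|^{2p\alpha}x,x\rangle\le\langle|T|^{2p}x,x\rangle^{\alpha}$ and $\langle|T^{*}|^{2p(1-\alpha)}x,x\rangle\le\langle|T^{*}|^{2p}x,x\rangle^{1-\alpha}$. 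Multiplying these and applying the weighted AM--GM half of \eqref{eq2.1} gives
\[
\langle|T|^{2p\alpha}x,x\rangle\langle|T^{*}|^{2p(1-\alpha)}x,x\rangle\le\langle|T|^{2p}x,x\rangle^{\alpha}\langle|T^{*}|^{2p}x,x\rangle^{1-\alpha}\le\big\langle\big(\alpha|T|^{2p}+(1-\alpha)|T^{*}|^{2p}\big)x,x\big\rangle.
\]

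Writing $M:=\|\alpha|T|^{2p}+(1-\alpha)|T^{*}|^{2p}\|$ and noting $\langle(\alpha|T|^{2p}+(1-\alpha)|T^{*}|^{2p})x,x\rangle\le M$ for every unit vector $x$, the two displays above combine to
\[
|\langle Tx,x\rangle|^{2p}\le\beta M+(1-\beta)|\langle Tx,x\rangle|^{p}\sqrt{M}.
\]
The right-hand side is a nondecreasing function of $|\langle Tx,x\rangle|^{p}$, so taking the supremum over unit vectors and using $\sup_{\|x\|=1}|\langle Tx,x\rangle|^{2p}=\big(\sup_{\|x\|=1}|\langle Tx,x\rangle|^{p}\big)^{2}=\omega^{2p}(T)$ yields the first inequality in \eqref{eq3.4}.

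For the second inequality I would read the first one as a quadratic inequality in $\omega^{p}(T)$, namely $\big(\omega^{p}(T)\big)^{2}-(1-\beta)\sqrt{M}\,\omega^{p}(T)-\beta M\le0$, which factors as $\big(\omega^{p}(T)-\sqrt{M}\big)\big(\omega^{p}(T)+\beta\sqrt{M}\big)\le0$; since the second factor is nonnegative this forces $\omega^{p}(T)\le\sqrt{M}$ (equivalently, one may just invoke \eqref{eq1.6}). Substituting $\omega^{p}(T)\sqrt{M}\le M$ into the middle term of \eqref{eq3.4} gives $\beta M+(1-\beta)\omega^{p}(T)\sqrt{M}\le\beta M+(1-\beta)M=M$, which is the second inequality.

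There is no genuine analytic obstacle; the only points demanding care are the use of \eqref{eq2.2} in its reversed direction (legitimate precisely because $\alpha$ and $1-\alpha$ lie in $[0,1]$), and the passage of the supremum through the square and the product, which is clean only because both suprema are of the single scalar function $|\langle Tx,x\rangle|^{p}$.
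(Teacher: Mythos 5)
Your proof is correct and follows essentially the same route as the paper: specialize Lemma \ref{lemma5} at $y=x$, use the reversed form of \eqref{eq2.2} (valid since $\alpha,1-\alpha\in[0,1]$), apply the weighted AM--GM step from \eqref{eq2.1}, and take the supremum over unit vectors. Your only departure is a small bonus: deriving $\omega^{p}(T)\le\sqrt{\left\|\alpha\left|T\right|^{2p}+\left(1-\alpha\right)\left|T^{*}\right|^{2p}\right\|}$ directly from the first inequality by factoring the quadratic, which makes the second inequality self-contained, whereas the paper simply invokes \eqref{eq1.6}.
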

\begin{proof}
	Let $x\in\mathscr{H}$ be a unit vector. Setting $y=x$ in  \eqref{eq2.5}, it follows that 
	\begin{align*}
	\left| {\left\langle {Tx,x} \right\rangle } \right|^{2p}  &\le \beta \left\langle {\left| T \right|^{2p\alpha } x,x} \right\rangle \left\langle {\left| {T^* } \right|^{2p\left( {1 - \alpha } \right)} x,x} \right\rangle \nonumber 
	\\
	&\qquad+ \left( {1 - \beta } \right)\left| {\left\langle {Tx,x} \right\rangle } \right|^p\sqrt {\left\langle {\left| T \right|^{2p\alpha } x,x} \right\rangle \left\langle {\left| {T^* } \right|^{2p\left( {1 - \alpha } \right)} x,x} \right\rangle }  \nonumber
	\\
	&\le \beta \left\langle {\left| T \right|^{2p } x,x} \right\rangle^{\alpha} \left\langle {\left| {T^* } \right|^{2p} x,x} \right\rangle^{\left( {1 - \alpha } \right)} \nonumber \qquad\qquad\qquad\qquad\qquad\qquad\qquad  \text{(by \eqref{eq2.2})}
	\\
	&\qquad + \left( {1 - \beta } \right)\left| {\left\langle {Tx,x} \right\rangle } \right|^p\sqrt {\left\langle {\left| T \right|^{2p } x,x} \right\rangle^{\alpha} \left\langle {\left| {T^* } \right|^{2p} x,x} \right\rangle^{\left( {1 - \alpha } \right)} } \nonumber 
	\\
	&\le  \beta  \left\langle {\left(\alpha\left| T \right|^{2p } +\left( {1 - \alpha } \right)\left| {T^* } \right|^{2p}\right)x,x} \right\rangle  
	\\
	&\qquad +  \left( {1 - \beta } \right)\left| {\left\langle {Tx,x} \right\rangle } \right|^p   \sqrt{\left\langle {\left(\alpha\left| T \right|^{2p } +\left( {1 - \alpha } \right)\left| {T^* } \right|^{2p}\right)x,x} \right\rangle }. \nonumber\qquad\qquad   \text{(by \eqref{eq2.1})}
	\end{align*}
	Taking the supremum over all unit vector $x\in \mathscr{H}$, we get the first in \eqref{eq3.4}.
	
	To obtain the second inequality from the first inequality we have
	\begin{align*}
	  \omega^{2p}\left(T\right) &\le \beta \left\|\alpha\left| T \right|^{2p } +\left( {1 - \alpha } \right)\left| {T^* } \right|^{2p}\right\| +  \left( {1 - \beta } \right)\omega^{p}\left(T\right) \sqrt{\left\|\alpha\left| T \right|^{2p } +\left( {1 - \alpha } \right)\left| {T^* } \right|^{2p}\right\|}
	\\
	&\le \left\|\alpha\left| T \right|^{2p } +\left( {1 - \alpha } \right)\left| {T^* } \right|^{2p}\right\|   \qquad (\text{by \eqref{eq1.6}})
	\end{align*}
	which proves the required result. 
\end{proof}

\begin{theorem}
	Let $T \in \mathscr{B}\left(\mathscr{H}\right)$. Then
	\begin{align}
	\omega^{p}\left(T\right) \le \frac{1}{2}\beta \left\|\left| T \right|^{2p\alpha } +\left| {T^* } \right|^{2p\left( {1 - \alpha } \right)}\right\| + \frac{1}{\sqrt{2}}\left( {1 - \beta } \right)\omega^{\frac{p}{2}}\left(T\right) \left\| \left| T \right|^{2p\alpha }+\left| {T^* } \right|^{2p\left( {1 - \alpha } \right)}\right\|^{\frac{1}{2}}\label{eq3.5}
	\end{align}
	for all $p\ge1$ and $0\le \alpha,\beta \le 1$.
\end{theorem}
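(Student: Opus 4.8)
The plan is to follow the template of the proofs leading to \eqref{eq3.3} and \eqref{eq3.4}, but to carry a factor $\left|\langle Tx,x\rangle\right|^{p/2}$ through the estimate rather than $\left|\langle Tx,x\rangle\right|^{p}$; this is exactly what produces the half-power of $\omega$ on the right-hand side of \eqref{eq3.5}.

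Fix a unit vector $x\in\mathscr{H}$ and abbreviate $A:=\left|T\right|^{2p\alpha}+\left|T^{*}\right|^{2p(1-\alpha)}\ge 0$. First I would record the basic mixed Schwarz estimate: taking $y=x$ in the second inequality of \eqref{eq2.5} (equivalently, Kato's inequality \eqref{eq1.11} raised to the power $p$ combined with \eqref{eq2.2} applied to the positive operators $\left|T\right|^{2\alpha}$ and $\left|T^{*}\right|^{2(1-\alpha)}$) gives
\[
\left|\langle Tx,x\rangle\right|^{2p}\le\langle\left|T\right|^{2p\alpha}x,x\rangle\,\langle\left|T^{*}\right|^{2p(1-\alpha)}x,x\rangle,
\]
whence, on taking square roots,
\[
\left|\langle Tx,x\rangle\right|^{p}\le\sqrt{\langle\left|T\right|^{2p\alpha}x,x\rangle\,\langle\left|T^{*}\right|^{2p(1-\alpha)}x,x\rangle}.
\]

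Next I would split $\left|\langle Tx,x\rangle\right|^{p}=\beta\left|\langle Tx,x\rangle\right|^{p}+(1-\beta)\left|\langle Tx,x\rangle\right|^{p}$ and estimate the two pieces separately. For the first piece, the case $p=1$ of \eqref{eq2.1} (the arithmetic--geometric mean inequality) gives $\beta\left|\langle Tx,x\rangle\right|^{p}\le\beta\sqrt{\langle\left|T\right|^{2p\alpha}x,x\rangle\,\langle\left|T^{*}\right|^{2p(1-\alpha)}x,x\rangle}\le\tfrac{\beta}{2}\langle Ax,x\rangle$. For the second piece, write $(1-\beta)\left|\langle Tx,x\rangle\right|^{p}=(1-\beta)\left|\langle Tx,x\rangle\right|^{p/2}\cdot\left|\langle Tx,x\rangle\right|^{p/2}$ and bound one factor by taking a further square root in the displayed estimate and applying \eqref{eq2.1} once more:
\[
\left|\langle Tx,x\rangle\right|^{p/2}\le\left(\langle\left|T\right|^{2p\alpha}x,x\rangle\,\langle\left|T^{*}\right|^{2p(1-\alpha)}x,x\rangle\right)^{1/4}\le\left(\tfrac{1}{2}\langle Ax,x\rangle\right)^{1/2}=\tfrac{1}{\sqrt{2}}\langle Ax,x\rangle^{1/2}.
\]
Adding the two estimates yields
\[
\left|\langle Tx,x\rangle\right|^{p}\le\frac{\beta}{2}\langle Ax,x\rangle+\frac{1-\beta}{\sqrt{2}}\left|\langle Tx,x\rangle\right|^{p/2}\langle Ax,x\rangle^{1/2}.
\]

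Finally I would take the supremum over all unit vectors $x$, using subadditivity of the supremum, the elementary bound $\sup_{x}f(x)g(x)\le(\sup_{x}f)(\sup_{x}g)$ for nonnegative $f,g$, together with $\sup_{\|x\|=1}\langle Ax,x\rangle=\|A\|$ (valid since $A\ge 0$) and $\sup_{\|x\|=1}\left|\langle Tx,x\rangle\right|^{p/2}=\omega^{p/2}(T)$; this yields
\[
\omega^{p}(T)\le\frac{\beta}{2}\|A\|+\frac{1-\beta}{\sqrt{2}}\,\omega^{p/2}(T)\,\|A\|^{1/2},
\]
which is precisely \eqref{eq3.5}. The hard part, such as it is, is this last step: the cross term must be handled through $\sup(fg)\le(\sup f)(\sup g)$ rather than by naively factoring out $\omega^{p/2}(T)$, and one should observe that \eqref{eq2.2} is invoked only at the exponent $p\ge 1$ while the exponent $p/2$ enters solely through monotonicity of power functions, so the argument is valid for every $p\ge 1$ and all $\alpha,\beta\in[0,1]$.
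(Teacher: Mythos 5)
Your proof is correct and takes essentially the same route as the paper: the paper first derives the two-variable inequality \eqref{eq3.6} (the same $\beta$-splitting of $\left|\left\langle Tx,y\right\rangle\right|^{p}$ into $\beta\left|\left\langle Tx,y\right\rangle\right|^{p}+(1-\beta)\left|\left\langle Tx,y\right\rangle\right|^{p/2}\left|\left\langle Tx,y\right\rangle\right|^{p/2}$ combined with Kato's inequality and \eqref{eq2.2}), then sets $y=x$, applies the arithmetic--geometric mean case of \eqref{eq2.1} to both terms, and takes the supremum exactly as you do. The only differences are cosmetic: you specialize to $y=x$ from the outset and make explicit the step $\sup(fg)\le(\sup f)(\sup g)$ together with $\sup_{\left\| x\right\|=1}\left\langle Ax,x\right\rangle=\left\| A\right\|$, which the paper leaves implicit.
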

\begin{proof}
	
	Following similar steps as obtained in \eqref{eq2.6} and \eqref{eq2.7}, one can observe that
	\begin{align}
	\left| {\left\langle {Tx,y} \right\rangle } \right|^{p}  &\le \beta \left\langle {\left| T \right|^{2p\alpha } x,x} \right\rangle^{\frac{1}{2}} \left\langle {\left| {T^* } \right|^{2p\left( {1 - \alpha } \right)} y,y} \right\rangle^{\frac{1}{2}} \label{eq3.6}
	\\
	&\qquad\qquad + \left( {1 - \beta } \right)\left| {\left\langle {Tx,y} \right\rangle } \right|^{\frac{p}{2}}\sqrt {\left\langle {\left| T \right|^{2p\alpha } x,x} \right\rangle^{\frac{1}{2}} \left\langle {\left| {T^* } \right|^{2p\left( {1 - \alpha } \right)} y,y} \right\rangle^{\frac{1}{2}} }  
	\nonumber\\ 
	&\le \left\langle {\left| T \right|^{2p\alpha } x,x} \right\rangle^{\frac{1}{2}} \left\langle {\left| {T^* } \right|^{2p\left( {1 - \alpha } \right)} y,y} \right\rangle^{\frac{1}{2}} \nonumber
	\end{align}
	for all $p\ge1$ and $0\le \alpha,\beta\le 1$.
	
	Let $x\in\mathscr{H}$ be a unit vector. Setting $y=x$ in \eqref{eq3.6}, it follows that 
	\begin{align*}
	\left| {\left\langle {Tx,y} \right\rangle } \right|^{p}  &\le \beta \left\langle {\left| T \right|^{2p\alpha } x,x} \right\rangle^{\frac{1}{2}} \left\langle {\left| {T^* } \right|^{2p\left( {1 - \alpha } \right)} x,x} \right\rangle^{\frac{1}{2}} 
	\\
	&\qquad\qquad + \left( {1 - \beta } \right)\left| {\left\langle {Tx,x} \right\rangle } \right|^{\frac{p}{2}}\sqrt {\left\langle {\left| T \right|^{2p\alpha } x,x} \right\rangle^{\frac{1}{2}} \left\langle {\left| {T^* } \right|^{2p\left( {1 - \alpha } \right)} x,x} \right\rangle^{\frac{1}{2}} } 
	\\
	&\le \frac{1}{2}\beta \left(\left\langle {\left| T \right|^{2p\alpha } x,x} \right\rangle + \left\langle {\left| {T^* } \right|^{2p\left( {1 - \alpha } \right)} x,x} \right\rangle \right)   
	\\
	&\qquad\qquad +\frac{1}{\sqrt{2}} \left( {1 - \beta } \right)\left| {\left\langle {Tx,x} \right\rangle } \right|^{\frac{p}{2}}\sqrt {\left(\left\langle {\left| T \right|^{2p\alpha } x,x} \right\rangle + \left\langle {\left| {T^* } \right|^{2p\left( {1 - \alpha } \right)} x,x} \right\rangle \right) }\qquad\text{(by \eqref{eq2.1})}
	\\
	&\le \frac{1}{2}\beta \left\langle {\left(\left| T \right|^{2p\alpha }+\left| {T^* } \right|^{2p\left( {1 - \alpha } \right)} \right)x,x} \right\rangle  
	\\
	&\qquad+ \frac{1}{\sqrt{2}}\left( {1 - \beta } \right)\left| {\left\langle {Tx,x} \right\rangle } \right|^{\frac{p}{2}}    \sqrt{\left\langle {\left(\left| T \right|^{2p\alpha }+\left| {T^* } \right|^{2p\left( {1 - \alpha } \right)} \right)x,x} \right\rangle}. 
	\end{align*}
	Taking the supremum over all unit vector $x\in \mathscr{H}$ we get the required result.
\end{proof}

The following corollary shows that our result \eqref{eq3.5} is more stronger than El-Hadad--Kittaneh inequality \eqref{eq1.5}.
\begin{corollary}
	Let $T \in \mathscr{B}\left(\mathscr{H}\right)$. Then	
	\begin{align*}
	\omega^{p}\left(T\right)&\le  \frac{1}{2}\beta \left\|\left| T \right|^{2p\alpha } +\left| {T^* } \right|^{2p\left( {1 - \alpha } \right)}\right\| + \frac{1}{\sqrt{2}}\left( {1 - \beta } \right)\omega^{\frac{p}{2}} \left(T\right) \left\| \left| T \right|^{2p\alpha }+\left| {T^* } \right|^{2p\left( {1 - \alpha } \right)}\right\|^{\frac{1}{2}} 
	\\
	&\le \frac{1}{2}  \left\|\left| T \right|^{2p\alpha } +\left| {T^* } \right|^{2p\left( {1 - \alpha } \right)}\right\|  
	\end{align*}
	for all $p\ge1$ and $0\le \alpha,\beta \le 1$.	
\end{corollary}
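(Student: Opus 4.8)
The plan is to obtain the first inequality for free from \eqref{eq3.5} and then to collapse the right-hand side by feeding in the El-Hadad--Kittaneh bound \eqref{eq1.5}. Throughout, abbreviate $M := \left\| \left| T \right|^{2p\alpha } + \left| {T^* } \right|^{2p\left( {1 - \alpha } \right)} \right\|$, which is a nonnegative real number. Applying \eqref{eq3.5} with the given parameters $p\ge 1$ and $\alpha,\beta\in[0,1]$ yields at once
\[
\omega^{p}\left(T\right) \le \tfrac{1}{2}\beta M + \tfrac{1}{\sqrt{2}}\left(1-\beta\right)\omega^{p/2}\left(T\right)\,M^{1/2},
\]
which is exactly the first asserted inequality, so nothing remains to be done there.

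For the second inequality the key observation is that \eqref{eq1.5} reads $\omega^{p}\left(T\right)\le \tfrac12 M$; since $t\mapsto t^{1/2}$ is increasing on $[0,\infty)$ and $\omega^{p/2}(T)=\left(\omega^{p}(T)\right)^{1/2}$, this gives $\omega^{p/2}\left(T\right)\le \left(\tfrac12 M\right)^{1/2}=\tfrac{1}{\sqrt 2}M^{1/2}$. Substituting this into the middle term of the inequality above,
\[
\tfrac{1}{\sqrt{2}}\left(1-\beta\right)\omega^{p/2}\left(T\right)\,M^{1/2}
\le \tfrac{1}{\sqrt{2}}\left(1-\beta\right)\cdot\tfrac{1}{\sqrt 2}M^{1/2}\cdot M^{1/2}
= \tfrac12\left(1-\beta\right)M,
\]
and therefore
\[
\tfrac{1}{2}\beta M + \tfrac{1}{\sqrt{2}}\left(1-\beta\right)\omega^{p/2}\left(T\right)\,M^{1/2}
\le \tfrac12\beta M + \tfrac12\left(1-\beta\right)M = \tfrac12 M,
\]
which is precisely the second asserted inequality.

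I do not expect a genuine obstacle here: the corollary is a purely formal consequence of \eqref{eq3.5} and \eqref{eq1.5}, valid under exactly the hypotheses ($p\ge1$, $\alpha,\beta\in[0,1]$) under which those two statements hold. The only point requiring a moment's care is to take the square root of the El-Hadad--Kittaneh inequality \emph{before} inserting it, using monotonicity of the square-root function together with the nonnegativity of $\omega^{p/2}(T)$ and $M^{1/2}$, so that the chain of estimates stays valid.
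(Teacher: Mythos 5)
Your proof is correct and follows essentially the same route as the paper: the first inequality is a direct quotation of \eqref{eq3.5}, and the second is obtained by inserting the El-Hadad--Kittaneh bound \eqref{eq1.5} (in square-root form, $\omega^{p/2}(T)\le \tfrac{1}{\sqrt{2}}\left\| \left| T \right|^{2p\alpha }+\left| {T^* } \right|^{2p\left( {1 - \alpha } \right)}\right\|^{1/2}$) into the middle term. Your explicit remark about taking the square root before substituting is exactly the step the paper leaves implicit when it cites \eqref{eq1.5}.
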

\begin{proof}
	From \eqref{eq3.5}, we have
	\begin{align*}
	\omega^{p}\left(T\right)&\le  \frac{1}{2}\beta \left\|\left| T \right|^{2p\alpha } +\left| {T^* } \right|^{2p\left( {1 - \alpha } \right)}\right\| + \frac{1}{\sqrt{2}}\left( {1 - \beta } \right)\omega^{\frac{p}{2}} \left(T\right) \left\| \left| T \right|^{2p\alpha }+\left| {T^* } \right|^{2p\left( {1 - \alpha } \right)}\right\|^{\frac{1}{2}} 
	\\
	&\le \frac{1}{2}\beta \left\|\left| T \right|^{2p\alpha } +\left| {T^* } \right|^{2p\left( {1 - \alpha } \right)}\right\|+ \frac{1}{2}\left( {1 - \beta } \right) \left\| \left| T \right|^{2p\alpha }+\left| {T^* } \right|^{2p\left( {1 - \alpha } \right)}\right\|   \qquad\text{(by \eqref{eq1.5})}
	 \\
	&= \frac{1}{2}  \left\|\left| T \right|^{2p\alpha } +\left| {T^* } \right|^{2p\left( {1 - \alpha } \right)}\right\|, 
	\end{align*}
	as required.
\end{proof}

The following two results generalize the Kittaneh--Moradi inequality \eqref{eq1.9}.
\begin{theorem}
	Let $T \in \mathscr{B}\left(\mathscr{H}\right)$. Then
	\begin{align}
	\omega^{2p}\left(T\right) \le \frac{1}{2}\beta \left\|\left| T \right|^{4p\alpha } +\left| {T^* } \right|^{4p\left( {1 - \alpha } \right)}\right\| + \frac{1}{2}\left( {1 - \beta } \right)\omega^{p}\left(T\right) \left\| \left| T \right|^{2p\alpha }+\left| {T^* } \right|^{2p\left( {1 - \alpha } \right)}\right\|\label{eq3.7}
	\end{align}
	for all $p\ge1$ and $0\le \alpha,\beta \le 1$.
\end{theorem}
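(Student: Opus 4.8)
The plan is to specialize Lemma~\ref{lemma5} (inequality \eqref{eq2.5}) to the diagonal $y=x$ and then bound the two resulting terms separately, using the power--mean inequality \eqref{eq2.1} and the operator inequality \eqref{eq2.2}.

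Concretely, I would fix a unit vector $x\in\mathscr{H}$ and start from \eqref{eq2.5} with $y=x$ and the same $\alpha,\beta,p$. For the \emph{first} term $\beta\langle|T|^{2p\alpha}x,x\rangle\langle|T^{*}|^{2p(1-\alpha)}x,x\rangle$, I would apply \eqref{eq2.2} with exponent $2$ to each of the positive operators $|T|^{2p\alpha}$ and $|T^{*}|^{2p(1-\alpha)}$, obtaining $\langle|T|^{2p\alpha}x,x\rangle^{2}\le\langle|T|^{4p\alpha}x,x\rangle$ and likewise for $T^{*}$; then the AM--GM case of \eqref{eq2.1} gives
\begin{align*}
\left\langle{\left|T\right|^{2p\alpha}x,x}\right\rangle\left\langle{\left|{T^{*}}\right|^{2p\left(1-\alpha\right)}x,x}\right\rangle
\le\sqrt{\left\langle{\left|T\right|^{4p\alpha}x,x}\right\rangle\left\langle{\left|{T^{*}}\right|^{4p\left(1-\alpha\right)}x,x}\right\rangle}
\le\frac{1}{2}\left\langle{\left(\left|T\right|^{4p\alpha}+\left|{T^{*}}\right|^{4p\left(1-\alpha\right)}\right)x,x}\right\rangle.
\end{align*}
For the \emph{second} term the same AM--GM inequality applied directly to the square root yields $\sqrt{\langle|T|^{2p\alpha}x,x\rangle\langle|T^{*}|^{2p(1-\alpha)}x,x\rangle}\le\tfrac12\langle(|T|^{2p\alpha}+|T^{*}|^{2p(1-\alpha)})x,x\rangle$.

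Substituting these two estimates back into \eqref{eq2.5} gives
\begin{align*}
\left|{\left\langle{Tx,x}\right\rangle}\right|^{2p}
\le\frac{1}{2}\beta\left\langle{\left(\left|T\right|^{4p\alpha}+\left|{T^{*}}\right|^{4p\left(1-\alpha\right)}\right)x,x}\right\rangle
+\frac{1}{2}\left(1-\beta\right)\left|{\left\langle{Tx,x}\right\rangle}\right|^{p}\left\langle{\left(\left|T\right|^{2p\alpha}+\left|{T^{*}}\right|^{2p\left(1-\alpha\right)}\right)x,x}\right\rangle,
\end{align*}
and taking the supremum over all unit vectors $x$ finishes the proof: the positive operators $|T|^{4p\alpha}+|T^{*}|^{4p(1-\alpha)}$ and $|T|^{2p\alpha}+|T^{*}|^{2p(1-\alpha)}$ have Rayleigh quotients whose suprema equal the respective norms, while $\sup_{\|x\|=1}|\langle Tx,x\rangle|^{p}\langle(\cdot)x,x\rangle\le\omega^{p}(T)\,\|\cdot\|$ because both factors are nonnegative.

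There is no serious obstacle here: the statement is essentially a repackaging of Lemma~\ref{lemma5} together with \eqref{eq2.1} and \eqref{eq2.2}. The only points that need a little care are the choice to iterate \eqref{eq2.2} with exponent $2$ in the first term --- this is exactly what produces the powers $4p\alpha$, $4p(1-\alpha)$ and the factor $\tfrac12$ --- and the observation that the supremum of the product appearing in the second term may be split into the product of the two suprema since both factors are nonnegative.
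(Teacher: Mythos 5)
Your proposal is correct and follows essentially the same route as the paper's proof: specialize \eqref{eq2.5} to $y=x$, bound the first term using \eqref{eq2.2} together with the AM--GM case of \eqref{eq2.1} to produce $\frac{1}{2}\langle(|T|^{4p\alpha}+|T^*|^{4p(1-\alpha)})x,x\rangle$, bound the second term by $\frac{1}{2}\langle(|T|^{2p\alpha}+|T^*|^{2p(1-\alpha)})x,x\rangle$, and take the supremum over unit vectors. The only (immaterial) difference is the order of the two elementary steps in the first term: the paper applies $ab\le\frac{1}{2}(a^2+b^2)$ first and then \eqref{eq2.2}, while you apply \eqref{eq2.2} first and then AM--GM.
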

\begin{proof}
	Let $x\in\mathscr{H}$ be a unit vector. Setting $y=x$ in \eqref{eq2.5}, it follows that 
	\begin{align*}
	\left| {\left\langle {Tx,x} \right\rangle } \right|^{2p}  &\le \beta \left\langle {\left| T \right|^{2p\alpha } x,x} \right\rangle \left\langle {\left| {T^* } \right|^{2p\left( {1 - \alpha } \right)} x,x} \right\rangle  
	\\
	&\qquad+ \left( {1 - \beta } \right)\left| {\left\langle {Tx,x} \right\rangle } \right|^p\sqrt {\left\langle {\left| T \right|^{2p\alpha } x,x} \right\rangle \left\langle {\left| {T^* } \right|^{2p\left( {1 - \alpha } \right)} x,x} \right\rangle }  
	\\
	&\le \frac{1}{2}\beta \left(\left\langle {\left| T \right|^{2p\alpha } x,x} \right\rangle^2+ \left\langle {\left| {T^* } \right|^{2p\left( {1 - \alpha } \right)} x,x} \right\rangle^2\right) \qquad \qquad \qquad  \qquad \text{(by \eqref{eq2.1})}
	\\
	&\qquad+ \frac{1}{2}\left( {1 - \beta } \right)\left| {\left\langle {Tx,x} \right\rangle } \right|^p    \left\langle {\left(\left| T \right|^{2p\alpha }+\left| {T^* } \right|^{2p\left( {1 - \alpha } \right)} \right)x,x} \right\rangle \qquad \qquad \text{(by \eqref{eq2.1})}
	\\
	&\le \frac{1}{2}\beta  \left\langle {\left| T \right|^{4p \alpha} x,x} \right\rangle +\left\langle {\left| {T^* } \right|^{4p\left( {1 - \alpha } \right)} x,x} \right\rangle  \qquad \qquad \qquad \qquad\qquad \qquad \text{(by \eqref{eq2.2})}
	\\
	&\qquad+ \frac{1}{2}\left( {1 - \beta } \right)\left| {\left\langle {Tx,x} \right\rangle } \right|^p    \left\langle {\left(\left| T \right|^{2p\alpha }+\left| {T^* } \right|^{2p\left( {1 - \alpha } \right)} \right)x,x} \right\rangle 
	\\
	&= \frac{1}{2}\beta  \left\langle {\left(\left| T \right|^{4p\alpha } +\left| {T^* } \right|^{4p\left( {1 - \alpha } \right)}\right)x,x} \right\rangle    
	\\
	&\qquad + \frac{1}{2}\left( {1 - \beta } \right)\left| {\left\langle {Tx,x} \right\rangle } \right|^p    \left\langle {\left(\left| T \right|^{2p\alpha }+\left| {T^* } \right|^{2p\left( {1 - \alpha } \right)} \right)x,x} \right\rangle 
	\end{align*}
	Taking the supremum over all unit vector $x\in \mathscr{H}$ we get the required result.
\end{proof}

\begin{remark}
Setting $\beta=\frac{1}{3}$, $\alpha=\frac{1}{2}$ and $p=1$ \eqref{eq3.7} we get the Kittaneh--Moradi \eqref{eq1.14}. 
 \end{remark}

\begin{corollary}
	Let $T \in \mathscr{B}\left(\mathscr{H}\right)$. Then	
\begin{align}
\label{eq3.8}	\omega^{2p}\left(T\right)&\le  \frac{1}{2}\beta \left\|\left| T \right|^{4p\alpha } +\left| {T^* } \right|^{4p\left( {1 - \alpha } \right)}\right\| + \frac{1}{2}\left( {1 - \beta } \right)\omega^{p}\left(T\right) \left\| \left| T \right|^{2p\alpha }+\left| {T^* } \right|^{2p\left( {1 - \alpha } \right)}\right\|
\\
&\le \frac{1}{2}  \left\|\left| T \right|^{4p\alpha } +\left| {T^* } \right|^{4p\left( {1 - \alpha } \right)}\right\|  \nonumber
\end{align}
for all $p\ge1$ and $0\le \alpha,\beta \le 1$.	
\end{corollary}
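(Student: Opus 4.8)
The first inequality in \eqref{eq3.8} is precisely the statement of Theorem \eqref{eq3.7}, so nothing new is needed there; the entire task is to prove the second inequality. To lighten notation write $A:=\left|T\right|^{2p\alpha}$ and $B:=\left|T^*\right|^{2p\left(1-\alpha\right)}$, both of which are positive operators in $\mathscr{B}\left(\mathscr{H}\right)$, so that $A+B\ge0$ and the quantities appearing are $\left\|A+B\right\|$ and $\left\|A^2+B^2\right\|=\left\|\left|T\right|^{4p\alpha}+\left|T^*\right|^{4p\left(1-\alpha\right)}\right\|$.

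The plan is to estimate the middle (mixed) term of \eqref{eq3.7} in two steps. First, apply the El-Hadad--Kittaneh inequality \eqref{eq1.5}, which gives $\omega^{p}\left(T\right)\le\frac12\left\|A+B\right\|$; hence
\[
\frac{1}{2}\left(1-\beta\right)\omega^{p}\left(T\right)\left\|A+B\right\|\le\frac{1}{4}\left(1-\beta\right)\left\|A+B\right\|^{2}.
\]
Second, since $t\mapsto t^{2}$ is a non-negative convex function on $\left[0,\infty\right)$ and $A,B$ are positive, Lemma \ref{lemma2}-type reasoning is not needed here; instead Lemma via \eqref{eq2.3} applied with $f\left(t\right)=t^{2}$ yields
\[
\frac{1}{4}\left\|A+B\right\|^{2}=\left\|\left(\frac{A+B}{2}\right)^{2}\right\|\le\left\|\frac{A^{2}+B^{2}}{2}\right\|=\frac{1}{2}\left\|A^{2}+B^{2}\right\|,
\]
where the first equality uses $\left\|C^{2}\right\|=\left\|C\right\|^{2}$ for the positive operator $C=\tfrac{A+B}{2}$.

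Combining these two estimates with the first inequality in \eqref{eq3.8} (i.e.\ Theorem \eqref{eq3.7}) gives
\[
\omega^{2p}\left(T\right)\le\frac{1}{2}\beta\left\|A^{2}+B^{2}\right\|+\frac{1}{2}\left(1-\beta\right)\left\|A^{2}+B^{2}\right\|=\frac{1}{2}\left\|A^{2}+B^{2}\right\|=\frac{1}{2}\left\|\left|T\right|^{4p\alpha}+\left|T^*\right|^{4p\left(1-\alpha\right)}\right\|,
\]
which is exactly the second inequality in \eqref{eq3.8}. The argument is purely a chaining of \eqref{eq3.7}, \eqref{eq1.5} and \eqref{eq2.3}, so there is no real obstacle; the only points to check are routine, namely that $A$ and $B$ are positive (clear, since they are functional-calculus powers of $\left|T\right|$ and $\left|T^*\right|$) and that $f\left(t\right)=t^{2}$ is non-negative and convex on $\left[0,\infty\right)$ (immediate), so that \eqref{eq2.3} applies. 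Setting $p=1$, $\alpha=\tfrac12$, $\beta=\tfrac13$ recovers \eqref{eq1.14}, and in general \eqref{eq3.8} refines \eqref{eq1.5} raised to the power $2$.
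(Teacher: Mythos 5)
Your proposal is correct and follows essentially the same route as the paper: bound $\omega^{p}(T)$ by $\tfrac12\left\| \left| T \right|^{2p\alpha}+\left| {T^*} \right|^{2p(1-\alpha)}\right\|$ via \eqref{eq1.5}, then convert $\left\| \left| T \right|^{2p\alpha}+\left| {T^*} \right|^{2p(1-\alpha)}\right\|^{2}$ into $2\left\| \left| T \right|^{4p\alpha}+\left| {T^*} \right|^{4p(1-\alpha)}\right\|$ using \eqref{eq2.3} with $f(t)=t^{2}$ and $\left\|C\right\|^{2}=\left\|C^{2}\right\|$ for positive $C$. The only cosmetic difference is your normalization $\left(\tfrac{A+B}{2}\right)^{2}$ versus the paper's $\left(\tfrac{2A+2B}{2}\right)^{2}$, which yields the identical estimate.
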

\begin{proof}
	From \eqref{eq3.7}, we have
	\begin{align*}
	\omega^{2p}\left(T\right)&\le  \frac{1}{2}\beta \left\|\left| T \right|^{4p\alpha } +\left| {T^* } \right|^{4p\left( {1 - \alpha } \right)}\right\| + \frac{1}{2}\left( {1 - \beta } \right)\omega^{p}\left(T\right) \left\| \left| T \right|^{2p\alpha }+\left| {T^* } \right|^{2p\left( {1 - \alpha } \right)}\right\|
	\\
	&\le  \frac{1}{2}\beta \left\|\left| T \right|^{4p\alpha } +\left| {T^* } \right|^{4p\left( {1 - \alpha } \right)}\right\|+ \frac{1}{4}\left( {1 - \beta } \right) \left\| \left| T \right|^{2p\alpha }+\left| {T^* } \right|^{2p\left( {1 - \alpha } \right)}\right\|^2 \qquad\qquad \text{(by \eqref{eq1.5})}
	 \\
	&= \frac{1}{2}\beta \left\|\left| T \right|^{4p\alpha } +\left| {T^* } \right|^{4p\left( {1 - \alpha } \right)}\right\|+ \frac{1}{4}\left( {1 - \beta } \right)   \left\|\left( \frac{2\left| T \right|^{2p\alpha }+2\left| {T^* } \right|^{2p\left( {1 - \alpha } \right)}} {2}\right)^2\right\|
	\\
	&\le \frac{1}{2}\beta \left\|\left| T \right|^{4p\alpha } +\left| {T^* } \right|^{4p\left( {1 - \alpha } \right)}\right\| + \frac{1}{8}\left( {1 - \beta } \right)   \left\|\left(  2\left| T \right|^{2p\alpha }\right)^2+\left(2\left| {T^* } \right|^{2p\left( {1 - \alpha } \right)} \right)^2\right\|  \qquad  \text{(by \eqref{eq2.3})}
	\\
	&= \frac{1}{2}\beta \left\|\left| T \right|^{4p\alpha } +\left| {T^* } \right|^{4p\left( {1 - \alpha } \right)}\right\|  + \frac{1}{2}\left( {1 - \beta } \right)   \left\| \left| T \right|^{4p\alpha } + \left| {T^* } \right|^{4p\left( {1 - \alpha } \right)}   \right\| 
	\\
	&= \frac{1}{2}  \left\|\left| T \right|^{4p\alpha } +\left| {T^* } \right|^{4p\left( {1 - \alpha } \right)}\right\|,  
	\end{align*}
	as required.
\end{proof}

We finish this work by noting that, for any $T\in \mathscr{B}\left(\mathscr{H}\right)$, $p\ge1$, and $\alpha,\beta\in \left[0,1\right]$. We use the inequalities \eqref{eq1.5} and \eqref{eq1.6}, to deduce the following refinements of \eqref{eq1.2}, \eqref{eq1.3} and \eqref{eq1.14}.
\begin{enumerate}
\item In fact, we have
\begin{align*}
\omega^{2p}\left(T\right)&=\beta \omega^{2p}\left(T\right) + \left(1-\beta\right)\omega^{2p}\left(T\right)
\\
&= \beta \omega^{2p}\left(T\right) + \left(1-\beta\right)\omega^{p}\left(T\right)\omega^{p}\left(T\right)
\\
&\le \frac{1}{4}\beta \left\| \left| T \right|^{2p\alpha }+\left| {T^* } \right|^{2p\left( {1 - \alpha } \right)}\right\|^2 + \frac{1}{2}\left(1-\beta\right)\omega^{p}\left(T\right)\left\| \left| T \right|^{2p\alpha }+\left| {T^* } \right|^{2p\left( {1 - \alpha } \right)}\right\|,
\end{align*}
which of course refines \eqref{eq3.7}, after applying \eqref{eq2.3}.\\

In particular, for $p=1$, $\alpha=\frac{1}{2}$ and $\beta = \frac{1}{3}$, we get
\begin{align}
\omega^{2}\left(T\right) 
\le \frac{1}{12} \left\| \left| T \right|+\left| {T^* } \right|\right\|^2 + \frac{1}{3} \omega\left(T\right)\left\| \left| T \right|+\left| {T^* } \right|\right\|.  \label{eq3.9}
\end{align}
Moreover, employing \eqref{eq1.2} on \eqref{eq3.9}, we get
\begin{align*}
\omega^{2}\left(T\right) 
&\le \frac{1}{12} \left\| \left| T \right|+\left| {T^* } \right|\right\|^2 + \frac{1}{3} \omega\left(T\right)\left\| \left| T \right|+\left| {T^* } \right|\right\|  
\\
&\le \frac{1}{12} \left\| \left| T \right|+\left| {T^* } \right|\right\|^2 + \frac{1}{3}  \left(\frac{1}{2}\left\| \left| T \right|+\left| {T^* } \right|\right\| \right)\left\| \left| T \right|+\left| {T^* } \right|\right\| 
\\
&= \frac{1}{12} \left\| \left| T \right|+\left| {T^* } \right|\right\|^2 +\frac{2}{12} \left\| \left| T \right|+\left| {T^* } \right|\right\|^2 
\\
&= \frac{1}{4} \left\| \left| T \right|+\left| {T^* } \right|\right\|^2,
\end{align*}
which indeed refines \eqref{eq1.2}. Moreover, \eqref{eq3.9} is much better than the first (the Kittaneh--Moradi) inequality \eqref{eq1.14}. To see that, from \eqref{eq3.9} we have 
\begin{align*}
\omega^{2}\left(T\right) 
&\le \frac{1}{12} \left\| \left| T \right|+\left| {T^* } \right|\right\|^2 + \frac{1}{3} \omega\left(T\right)\left\| \left| T \right|+\left| {T^* } \right|\right\|  
\\
&= \frac{1}{12} \left\| \left(\frac{2\left| T \right|+2\left| {T^* } \right|}{2}\right)^2\right\|  + \frac{1}{3} \omega\left(T\right)\left\| \left| T \right|+\left| {T^* } \right|\right\|   
\\
&\le \frac{1}{24} \left\|\left(2\left| T \right| \right)^2+ \left( 2\left| {T^* } \right|\right)^2\right\|   + \frac{1}{3} \omega\left(T\right)\left\| \left| T \right|+\left| {T^* } \right|\right\| \qquad\qquad (\text{by  \eqref{eq2.3}})
\\
&=\frac{1}{6} \left\| \left| T \right|^2+ \left| {T^* } \right|^2 \right\|   + \frac{1}{3} \omega\left(T\right)\left\| \left| T \right|+\left| {T^* } \right|\right\|, 
\end{align*}
which exactly the first inequality in \eqref{eq1.14}, as required.\\

\item Using similar arguments, from \eqref{eq1.5} and \eqref{eq1.6}, we can deduce the new inequality
\begin{align*}
\omega^{2p}\left(T\right)&=\beta \omega^{2p}\left(T\right) + \left(1-\beta\right)\omega^{2p}\left(T\right)
\\
&= \beta \omega^{2p}\left(T\right) + \left(1-\beta\right)\omega^{p}\left(T\right)\omega^{p}\left(T\right)
\\
&\le \frac{1}{4}\beta \left\| \left| T \right|^{2p\alpha }+\left| {T^* } \right|^{2p\left( {1 - \alpha } \right)}\right\|^2 +  \left(1-\beta\right)\omega^{p}\left(T\right)\left\|\alpha \left| T \right|^{2p }+\left( {1 - \alpha } \right)\left| {T^* } \right|^{2p}\right\|^{\frac{1}{2}}.
\end{align*}
In particular, for $p=1$, $\alpha=\frac{1}{2}$ and $\beta = \frac{1}{3}$, we get
\begin{align}
\omega^{2}\left(T\right) 
&\le \frac{1}{12}  \left\| \left| T \right| +\left| {T^* } \right| \right\|^2 + \frac{\sqrt{2}}{3} \omega \left(T\right)\left\|  \left| T \right|^{2  }+ \left| {T^* } \right|^{2}\right\|^{\frac{1}{2}}   \label{eq3.10}
\\
&\le \frac{1}{12} \left\| \left| T \right|+\left| {T^* } \right|\right\|^2 + \frac{\sqrt{2}}{3 } \left(\frac{1}{\sqrt{2}}\left\| \left| T \right|^2+\left| {T^* } \right|^2\right\|^{\frac{1}{2}} \right)\left\|  \left| T \right|^{2  }+ \left| {T^* } \right|^{2}\right\|^{\frac{1}{2}}  \qquad\qquad (\text{by  \eqref{eq1.3}})\nonumber
\\
&=\frac{1}{12} \left\| \left| T \right|+\left| {T^* } \right|\right\|^2 +   \frac{1}{3} \left\|  \left| T \right|^{2  }+ \left| {T^* } \right|^{2}\right\|.\nonumber 
\end{align}
Moreover,  we have
\begin{align*}
\omega^{2}\left(T\right) 
&\le \frac{1}{12} \left\| \left| T \right|+\left| {T^* } \right|\right\|^2 +   \frac{1}{3} \left\|  \left| T \right|^{2  }+ \left| {T^* } \right|^{2}\right\| 
\\
&= \frac{1}{12} \left\| \left(\frac{2\left| T \right|+2\left| {T^* } \right|}{2}\right)^2\right\| +\frac{2}{6} \left\|  \left| T \right|^{2  }+ \left| {T^* } \right|^{2}\right\| 
\\
&\le\frac{1}{24} \left\|\left(2\left| T \right| \right)^2+ \left( 2\left| {T^* } \right|\right)^2\right\| +\frac{2}{6} \left\|  \left| T \right|^{2  }+ \left| {T^* } \right|^{2}\right\|    \qquad\qquad\qquad    (\text{by  \eqref{eq2.3}})
\\
&= \frac{1}{6} \left\|  \left| T \right|^2+ \left| {T^* } \right|^2 \right\| +\frac{2}{6} \left\|  \left| T \right|^{2  }+ \left| {T^* } \right|^{2}\right\| 
\\
&= \frac{1}{2} \left\| \left| T \right|^2+\left| {T^* } \right|^2\right\|,
\end{align*}
which means that the double inequality in \eqref{eq3.10} present two new stronger refinements than the right-hand side of \eqref{eq1.3}.  

Hence, the inequality \eqref{eq3.9} is stronger than both \eqref{eq1.2} and its refinement \eqref{eq1.14}, as well as \eqref{eq3.10} is much better than the inequality \eqref{eq1.3}. Moreover, since
\begin{align*}
\left\| \left| T \right|+\left| {T^* } \right|\right\|^2 =\left\| \left(\frac{2\left| T \right|+2\left| {T^* } \right|}{2}\right)^2\right\|
&\le\frac{1}{2} \left\|\left(2\left| T \right| \right)^2+ \left( 2\left| {T^* } \right|\right)^2\right\|\qquad  (\text{by  \eqref{eq2.3}})
\\
&=2\left\| \left| T \right|^2+ \left| {T^* } \right|^2\right\|,
\end{align*}
which implies that
\begin{align*}
\left\| \left| T \right|+\left| {T^* } \right|\right\|  \le \sqrt{2}\left\| \left| T \right|^2+ \left| {T^* } \right|^2\right\|^{\frac{1}{2}}.
\end{align*}
Thus, \eqref{eq3.9} refines \eqref{eq3.10}.

\end{enumerate} 

\begin{remark}
The celebrated  Buzano inequality states that \cite{Buzano}: 
\begin{align*}
\left| {\left\langle {x,e} \right\rangle \left\langle {e,y}
	\right\rangle } \right| \le \frac{1}{2}\left( {\left|
	{\left\langle {x,y} \right\rangle } \right| + \left\| x
	\right\|\left\| y \right\|} \right) 
\end{align*}
for every vectors $x,y,e\in \mathscr{H}$ with $\|e\|=1$. It should mentioned that, by employing the inequality \eqref{eq2.4} and/or use the same considered techniques, simple computations could give more than one refinements of the Buzano inequality. Thus, several numerical radius inequalities could be stated. We leave the details to the interested reader. 
\end{remark}

\begin{remark}
\label{remark8}For all vectors $x$ and $y$ in an inner
product space, we can refine \eqref{eq2.4}, by noting that for a positive integer $n\ge1$ we have
 
 \begin{align*}
  \left| {\left\langle {x,y} \right\rangle } \right|^{2n}&=\left( \left| {\left\langle {x,y} \right\rangle } \right|^2 \right)^n   
\\
 &\le  \left(\left(1-\beta\right)\left| {\left\langle {x,y} \right\rangle } \right|\left\| x \right\|\left\| y \right\| + \beta \left\| x \right\|^2 \left\| y \right\|^2    \right)^n \qquad\qquad\qquad \text{(by \eqref{eq2.4})}
\\
&= \sum\limits_{k = 0}^n {\left( \begin{array}{l}
	n \\ 
	k \\ 
	\end{array} \right)\left(\left(1-\beta\right)\left| {\left\langle {x,y} \right\rangle } \right|\left\| x \right\|\left\| y \right\|    \right)^k \left( \beta \left\| x \right\|^2 \left\| y \right\|^2    \right)^{n - k} }  \qquad \text{(by Binomial Theorem)}
\\
&= \sum\limits_{k = 0}^n {\left( \begin{array}{l}
	n \\ 
	k \\ 
	\end{array} \right) \left(1-\beta\right)^k \beta^{\left(n-k\right)} \left| {\left\langle {x,y} \right\rangle } \right|^k       \left\| x \right\|^{2n-k} \left\| y \right\|^{2n-k}    } 
\\
&\le \left(1-\beta\right)\left| {\left\langle {x,y} \right\rangle } \right|^n\left\| x \right\|^n\left\| y \right\|^n + \beta  \left\| x \right\|^{2n}\left\| y \right\|^{2n}
\\
&\le    \left\| x \right\|^{2n}\left\| y \right\|^{2n},
\end{align*}
the last two inequalities follow  from the convexity of $t^n$ $(n\ge1, t>0)$ and the classical Cauchy--Schwarz inequality \eqref{eq1.7}; respectively. Hence, we can deduce the sequence of inequalities
 \begin{align}
\left| {\left\langle {x,y} \right\rangle } \right|^{2n} 
&\le   \sum\limits_{k = 0}^n {\left( \begin{array}{l}
	n \\ 
	k \\ 
	\end{array} \right) \left(1-\beta\right)^k \beta^{\left(n-k\right)} \left| {\left\langle {x,y} \right\rangle } \right|^k       \left\| x \right\|^{2n-k} \left\| y \right\|^{2n-k}    } \label{eq3.11} 
\\
&\le \left(1-\beta\right)\left| {\left\langle {x,y} \right\rangle } \right|^n\left\| x \right\|^n\left\| y \right\|^n + \beta  \left\| x \right\|^{2n}\left\| y \right\|^{2n}
\nonumber\\
&\le    \left\| x \right\|^{2n}\left\| y \right\|^{2n},\nonumber
\end{align}
for all for positive integer $n\ge1$ and $\beta \in \left[0,1\right]$. Hence the first inequality in \eqref{eq3.11} gives the same result as \eqref{eq2.4} when $n=1$, however it refines \eqref{eq2.4} for all $n\ge2$.

In particular, for $n=1$ we refer to \eqref{eq2.4}, while for $n=2$ we get
 \begin{align*}
\left| {\left\langle {x,y} \right\rangle } \right|^{4} 
&\le    \beta^{2}        \left\| x \right\|^{4} \left\| y \right\|^{4} +2 \left(1-\beta\right)  \beta  \left| {\left\langle {x,y} \right\rangle } \right| \left\| x \right\|^{3} \left\| y \right\|^{3} + \left(1-\beta\right)^2   \left| {\left\langle {x,y} \right\rangle } \right|^2       \left\| x \right\|^{2} \left\| y \right\|^{2} 
\\
&\le \left(1-\beta\right)\left| {\left\langle {x,y} \right\rangle } \right|^2\left\| x \right\|^2\left\| y \right\|^2 + \beta  \left\| x \right\|^{4}\left\| y \right\|^{4}
\\
&\le    \left\| x \right\|^{4}\left\| y \right\|^{4} 
\end{align*}
for all $\beta \in \left[0,1\right]$. Hence, several numerical radius inequalities could be stated using \eqref{eq3.11} which may refine all stated results in this work. We leave the details to the interested reader. 
\end{remark}

\noindent {\bf Conclusion.}
It's well known that the inequality \eqref{eq1.3} refines \eqref{eq1.1}, while \eqref{eq1.2} is sharper than \eqref{eq1.3}. In \cite{KM}, it was proved that \eqref{eq1.14} is srtonger than \eqref{eq1.3} but it is not better than \eqref{eq1.2}. 

In this work, some numerical radius inequalities that refines all previous mentioned inequalities are established. More precisely, as we finished this work and among other presented inequalities, we proved that 
 \begin{align*}
\omega^{2p}\left(T\right) 
&\le \frac{1}{4}\beta \left\| \left| T \right|^{2p\alpha }+\left| {T^* } \right|^{2p\left( {1 - \alpha } \right)}\right\|^2 + \frac{1}{2}\left(1-\beta\right)\omega^{p}\left(T\right)\left\| \left| T \right|^{2p\alpha }+\left| {T^* } \right|^{2p\left( {1 - \alpha } \right)}\right\|
\\
&\le  \frac{1}{2}\beta \left\|\left| T \right|^{4p\alpha } +\left| {T^* } \right|^{4p\left( {1 - \alpha } \right)}\right\| + \frac{1}{2}\left( {1 - \beta } \right)\omega^{p}\left(T\right) \left\| \left| T \right|^{2p\alpha }+\left| {T^* } \right|^{2p\left( {1 - \alpha } \right)}\right\|
\\
&\le \frac{1}{2}  \left\|\left| T \right|^{4p\alpha } +\left| {T^* } \right|^{4p\left( {1 - \alpha } \right)}\right\| 
\end{align*}
for  any operator $T\in \mathscr{B}\left(\mathscr{H}\right)$, $p\ge1$, and $\alpha,\beta\in \left[0,1\right]$. 

Choosing $p=1$, $\alpha=\frac{1}{2}$ and $\beta = \frac{1}{3}$, in the first inequality above,  we get the inequality \eqref{eq3.9}. 
It is proved that the inequality \eqref{eq3.9} is much better than the strongest well known inequality \eqref{eq1.2}. As well as, the inequality \eqref{eq3.9}  refines  the inequalities \eqref{eq1.3}, \eqref{eq1.14} and \eqref{eq3.10}. 

As noted in Remark \ref{remark8}, the refinement of the Cauchy--Schwarz inequality  \eqref{eq3.11}  which generalizes  \eqref{eq2.4},   can be used to refine more numerical radius inequalities presented in this work.


\begin{thebibliography}{5}
	\setlength{\itemsep}{0pt}
	
\bibitem{AS}J. Aujla and F. Silva, Weak majorization inequalities and convex functions, {\em Linear Algebra Appl.}, {\bf 369} (2003), 217--233.	
	
\bibitem{Alomari1} M.W. Alomari, Refinements of some numerical radius inequalities for Hilbert space operators, {\em Linear \& Multilinear Algebra}, \url{https://doi.org/10.1080/03081087.2019.1624682}, (2019).

\bibitem{Alomari2}M.W. Alomari, On the generalized mixed Schwarz inequality, {\em Proceedings of the Institute of Mathematics and Mechanics}, National Academy of Sciences of Azerbaijan,  {\bf 46} (1) (2020), 3--15.	

\bibitem{Alomari3}M.W. Alomari, Improvements of some numerical radius inequalities, 2019, \url{https://arxiv.org/abs/1912.01492}  

\bibitem{Buzano}M.L. Buzano, Generalizzazione della diseguaglianza di Cauchy-Schwarz, (Italian), {\em Rend. Sem.
Mat. Univ. e Politech. Torino.}, {\bf 31} (1971/73), 405--409 (1974).


\bibitem{D}S.S. Dragomir,  Power inequalities for the numerical radius of a product of two operators in Hilbert spaces, {\em Sarajevo J. Math.}, {\bf5} (18)(2) (2009), 269--278.

\bibitem{EF}M. El-Haddad and  F. Kittaneh,    Numerical radius inequalities for Hilbert space operators. II,
{\em Studia Math.}, {\bf182} (2) (2007), 133--140.


\bibitem{FMPS}T. Furuta, J. Mi\'{c}i\'{c}, J. Pe\v{c}ari\'{c}, and Y. Seo, Mond--Pe\v{c}ari\'{c} method in operator inequalities,
Element, Zagreb, 2005. 
 
 
\bibitem{TK}T. Kato,  Notes on some inequalities for linear operators, {\em Math. Ann.,} {\bf125} (1952), 208--212.

 
\bibitem{KM}F. Kittaneh and H.R. Moradi, Cauchy--Schwarz type inequalities and applications to numerical radius inequalities, {\em Math. Ineq. Appl.}, {\bf 23} (3) (2020), 1117--1125.

\bibitem{K1}F. Kittaneh, Numerical radius inequalities for Hilbert space operators, {\em  Studia Math.}, {\bf 168} (1) (2005), 73--80.

\bibitem{K2} F. Kittaneh, A numerical radius inequality and an estimate for the numerical radius of the Frobenius companion matrix, {\em Studia Math.} {\bf158} (2003), 11--17.

\bibitem{MPF}D.S. Mitrinovi\'{c}, J. Pe\v{c}ari\'{c}, A.M Fink, Classical and New Inequalities in Analysis, Kluwer Academic Publishers, 	Dordrecht, 1993.
 
\end{thebibliography}
\end{document}